\def\O{\Omega}
\def\bM{{\bf M}}
\def\bK{{\bf K}}
\def\cM{\mathcal{M}}
\def\R{\mathbb{R}}
\newcommand{\ga}{\alpha}
\newcommand{\gb}{\beta}
\newcommand{\gs}{\sigma}
\newcommand{\gve}{\varepsilon}
\newcommand{\bq}{{\bf q}}
\newcommand{\bL}{{\bf L}}
\newcommand{\bs}{{\bm \sigma}}
\newcommand{\bt}{{\bm \tau}}
\newtheorem{thm}{Theorem}[section]
\newtheorem{lemma}[thm]{Lemma}
\newtheorem{corollary}[thm]{Corollary}
\newcommand{\la}{\langle}
\newcommand{\ra}{\rangle}
\begin{document}
\title[A weighted setting for the Poisson problem with singular sources]{A weighted
setting for the numerical approximation of the Poisson problem with singular sources}

\author{Irene Drelichman}
\address{IMAS (UBA-CONICET) \\
Facultad de Ciencias Exactas y Naturales\\
Universidad de Buenos Aires\\
Ciudad Universitaria\\
1428 Buenos Aires\\
Argentina}
\email{irene@drelichman.com}

\author{Ricardo G. Dur\'an}
\address{IMAS (UBA-CONICET) and Departamento de Matem\'atica\\
Facultad de Ciencias Exactas y Naturales\\
Universidad de Buenos Aires\\
Ciudad Universitaria\\
1428 Buenos Aires\\
Argentina}
\email{rduran@dm.uba.ar}

\author{Ignacio Ojea}
\address{Departamento de Matem\'atica\\
Facultad de Ciencias Exactas y Naturales\\
Universidad de Buenos Aires\\
Ciudad Universitaria\\
1428 Buenos Aires\\
Argentina}
\email{iojea@dm.uba.ar}

\thanks{Supported by ANPCyT under grant PICT 2014-1771, by CONICET under grant 11220130100006CO  and by Universidad de Buenos Aires under grant 20020120100050BA. The first and second authors are members of
CONICET, Argentina.}

\keywords{Finite element methods, Poisson problem, weighted Sobolev spaces}

\subjclass[2010]{Primary: 65N30; Secondary: 65N15, 35B45}

\begin{abstract}

We consider the approximation of Poisson type problems where the source is given by a singular measure
and the domain is a convex polygonal or polyhedral
domain.
First, we prove the well-posedness of the Poisson problem
when the source belongs to the dual of a weighted Sobolev space where the weight
belongs to the Muckenhoupt class. Second, we prove the stability in weighted norms for standard finite element
approximations under the quasi-uniformity assumption on the family of meshes.

\end{abstract}

\maketitle

\section{Introduction}
\label{intro}

This paper is motivated by the analysis of numerical approximations of elliptic problems
with singular sources.
The standard finite element analysis is based on the variational
formulation in Sobolev spaces. For example, for the classic Poisson problem in a bounded
domain $\Omega\in\R^n$,
it is known that the problem is well-posed in $H_0^1(\Omega)$ whenever the right hand
side is in the dual space $H^{-1}(\Omega)$.

However, the finite element method can be applied in many situations where the right hand side
is not in $H^{-1}(\O)$, and consequently, the solution is not in $H_0^1(\Omega)$.
Interesting examples of this situation arise when the right hand side is given by a
singular measure $\mu$.

Given a bounded domain $\Omega\subset\R^n$, $n=2$ or $n=3$, we consider the Poisson problem
\begin{equation}
\label{problem}
\left\{
\begin{aligned}
-\Delta u = \mu &\ \ \mbox{in}\ \ \Omega,\\
u = 0  & \ \ \mbox{on}\ \ \partial\Omega
\end{aligned}
\right.
\end{equation}

To perform a variational analysis, suitable in particular for finite element approximations,
it is natural to work with weighted Sobolev spaces. This approach has been used in several papers (see for example
\cite{AGM_delta,ABSV_delta,D_1d3d,DQ_1d3d}).

Associated with a locally integrable function $w\ge 0$ we define
the space $L^p_w(\O)$ as the usual $L^p$ space with measure $w(x) dx$ and
$\bL^p_w(\O)=L^p_w(\O)^n$.
We will also work with the Sobolev spaces
$W_w^{1,p}(\O)=\{v\in L^p_w(\O): |\nabla v|\in L^p_w(\O)\}$
and $W_{w,0}^{1,p}(\O) = \overline{C_0^\infty(\O)}$.
As it is usual, we replace $W^{1,2}$ by $H^1$.

Consider, for example, the
simple situation where $\mu$ is the Dirac $\delta$ and $0\in\O$. In this case,
$$
|\nabla u(x)|\sim |x|^{1-n}\notin L^2(\O)
$$
but
$$
|\nabla u|\in L_w^2(\O)
$$
if $w(x)=|x|^\alpha$ with $\alpha>n-2$. Therefore, to analyze this problem one can work
with a Sobolev space associated with $w$. More generally, in \cite{DQ_1d3d}
the authors consider an application which leads to a problem like \eqref{problem}
with a measure $\mu$ supported in a curve $\Gamma$ contained in
a three dimensional $\O$. They propose to work with
$w(x)=dist(x,\Gamma)^\alpha$, $0<\alpha<1$, and prove the well-posedness
of the problem in the associated weighted Sobolev space when $\alpha$ is small enough.
Afterwards, in \cite{D_1d3d},
the author gives a more general stability result for the continuous as well as
for the discrete problem obtained by the standard finite element method.
However, his proof is not correct.
Indeed, the argument given in that paper
is based on a Helmholtz decomposition in weighted spaces.
The author introduces a saddle point formulation of the problem and tries to prove the
usual inf-sup conditions that imply the existence and uniqueness of solution.
The flaw lies on the fact that (using the notation of that paper) the inf-sup conditions
needed are
$$
\sup_{\bt\neq 0} \frac{a(\bs,\bt)}{\|\bt\|_{\bK_2}}\ge\alpha_1\|\bs\|_{\bK_1}\, ,
\sup_{\bs\neq 0} \frac{a(\bs,\bt)}{\|\bs\|_{\bK_1}}\ge\alpha_2\|\bt\|_{\bK_2}
$$
where $\bK_i=\{\bs : b_i(w,\bs)=0\}$ and not those proved in \cite{D_1d3d}
where these inequalities are proved but with $\bK_i$ replaced by $\bM_i$ (see Lemma 2.1 in that paper).

Recall that to obtain a Helmholtz decomposition for a vector field $\bq$ one has
to solve
\begin{equation}
\label{auxproblem}
\left\{
\begin{aligned}
-\Delta u &= \mbox{div}\bq &\ \ \mbox{in}\ \ \Omega,\\
u &= 0  & \ \ \ \ \mbox{on}\ \ \partial\Omega
\end{aligned}
\right.
\end{equation}
with a control of $\nabla u$ in terms of $\bq$. For example, for $\bq\in L^p_w(\O)$,
we want to have the weighted a priori estimate
\begin{equation}
\label{a priori}
\|\nabla u\|_{\bL^p_w}\le C\|\bq\|_{\bL^p_w}.
\end{equation}
The first goal
of our paper is to prove these estimates for convex polygonal or polyhedral domains and for $w\in A_p$, $1<p<\infty$
(see Section \ref{continuous case}
for the definition of the Muckenhoupt classes $A_p$).
This kind of domains are very important in finite element applications.
Analogous estimates have been proved in
\cite[Theorem 2.5]{BDS} for the case of $C^1$-domains.

For non-smooth domains the convexity assumption is necessary
as it is
shown by the following example.
Consider a polygonal domain with an interior angle $\omega>\pi$ at the origin.
It is known (see \cite{Grisvard_PDE}) that the solution $u$ can have a singularity such that
$|\nabla u| \sim |x|^{s-1}$, with $s=\pi/\omega<1$, even
if the right hand side is very smooth. In such a case
$|\nabla u|^p|x|^\alpha \sim |x|^{ps-p+\alpha}$,
but $|x|^{\alpha}\in A_p$ for $-2<\alpha<2(p-1)$ (see, for example, \cite{D2}) and
$|\nabla u|\notin L^p_{|x|^\alpha}$ whenever $-2<\alpha\le -2+p(1-s)$.
On the other hand, assuming that the weight
singularities are far from the boundary, as it is the case of the model problem
considered in \cite{D_1d3d}, the weighted a priori estimates can be
generalized for non-convex Lipschitz polytopes (see \cite{OS}).

As we mentioned at the beginning, our main motivation comes from the analysis of finite
element methods. Usually, singular problems require
the use of appropriate adapted meshes to obtain good numerical approximations efficiently.
One way to produce this kind of meshes is based on the use of a posteriori error estimators.
As it is known, efficient and reliable estimators
can be derived by using the stability of the continuous problem. Therefore, these kind of results
could be obtained using \eqref{a priori}.
This was done for the case of $\mu=\delta$ in \cite{AGM_delta}.

Another way to produce adapted meshes in problems where the
location of the singularities is known a priori, like those considered here, is
by using stability results in order to
bound the approximation error by an interpolation one and then designing the meshes
in such a way that this last error is of optimal order (see, for example,
\cite{ABSV_delta} and \cite{O_pointsource}).

To prove stability results in weighted norms for general meshes
seems to be a very difficult task. Indeed, the problem is closely related
with stability in $W^{1,p}$ norms for $2<p\le\infty$, a problem that has received great attention
by people working in the theory of finite element methods in the last forty years
(see, for example, the books \cite{Ciarlet, BS_FEM} and references therein).
More precisely, as a consequence of a celebrated Rubio de Francia's
extrapolation theorem, stability in $H^1_w$ for all $w\in A_1$, would imply stability in $W^{1,p}$
for $2<p<\infty$ as well as almost stability (i.e. up to a logarithmic factor) in $W^{1,\infty}$.
As far as we know, this kind of results have not been proved for general meshes (not even assuming
regularity of the family of triangulations).

The second goal of our paper is to prove stability results in weighted norms
for standard finite element approximations under the assumption that the family
of meshes is quasi-uniform. Although this is a severe restriction for the problems
considered here, our result seems to be the first one on stability for a general
family of weights, including those given by appropriate powers of the distance to a closed
subset $\Gamma\subset\bar{\O}$ arising in the analysis of these problems. Further research
is needed to improve the results in order to allow more realistic meshes.
Our proof of the stability results make use of an
estimate proved by Rannacher and Scott \cite{RS}. Roughly speaking, their result
says that, if $u_h$ denotes the finite element approximation to the solution
$u$ of a regular problem then, for any $z\in\O$, the value $|\nabla u_h(z)|$
is bounded by a local contribution given by the average of $|\nabla u|$ in the element
containing $z$ plus a decay estimate  which is small away from $z$.
It is interesting to remark that this is the only part of our argument where the restriction
on the meshes is needed.
It is worth noting that, since our arguments are based on estimates for the Green function, the same techniques may be
applied to more general equations provided those estimates hold true.

The rest of the paper is organized as follows. In Section \ref{continuous case} we recall the Muckenhoupt
classes and prove the well posedness of the Poisson problem in weighted Sobolev spaces for convex polygonal
or polyhedral domains. Section \ref{discrete case} deals with the stability in weighted norms for finite
element approximations.

\section{The continuous case}
\label{continuous case}
\setcounter{equation}{0}

In this section we prove the weighted a priori estimate \eqref{a priori} for  equation \eqref{auxproblem}.
We will follow the arguments given in \cite{CD_Apestimate} which are a generalization
of techniques used to prove continuity of singular integral operators.
The difference with \cite{CD_Apestimate} is that
now we are interested in bounding first derivatives when the right hand side is in a weaker
space than those considered in that paper. Therefore, we need to use different estimates
for the Green function.

As  mentioned in the introduction, our motivation comes from the analysis of finite element
approximations, and, therefore, it is important to consider polygonal or polyhedral domains.
In our proofs we will use  estimates for the Green function which, for these kinds of domains,
have been proved only for  the Poisson equation. On the other hand,
if the domain is smooth enough, the estimates for the Green function that we are going to use
are known to hold for general elliptic equations (see \cite[Theorem 3.3]{Krasovskii}) and, therefore, our results apply in that case.

A weight is a non-negative measurable function $w$ defined in $\R^n$. Let us recall that, for $1<p<\infty$,
the  Muckenhoupt $A_p$ class is defined by the condition
$$
[w]_{A_p}:=\sup_{Q}\left(\frac{1}{|Q|}\int_Q w\right)\left(\frac{1}{|Q|}\int_Q w^{-\frac{1}{p-1}}\right)^{p-1}
<\infty
$$
where the supremum is taken over all cubes $Q$.

We will make use of the Hardy-Littlewood  maximal operator defined as
$$
\cM f(x) = \sup_{Q\ni x} \frac{1}{|Q|}\int_Q|f(y)|dy,
$$
where the supremum is taken over all cubes containing $x$.
It is well known that $\cM$ is bounded in $L^p_w(\R^n)$, for $1<p<\infty$,
if and only if $w\in A_p$ (see, for example, \cite{Duoandikoetxea}).

In the next section we will also work with the $A_1$ class. Recall that a weight is
in $A_1$ if
\begin{equation}
\label{peso A1}
[w]_{A_1}:=
\sup_{x\in\R^n}\frac{\cM w(x)}{w(x)}<\infty
\end{equation}
In our proofs we will make use of the well known inclusion $A_1\subset A_2$ (see \cite{Duoandikoetxea}).
We will also need the local sharp maximal operator, namely,
$$
\cM^{\#}_\Omega f(x) = \sup_{\Omega\supset Q\ni x}\frac{1}{|Q|}\int_Q |f(y)-f_Q| dy,
$$
where now the supremum is taken over all cubes containing $x$ and contained in $\Omega$.

It is known that the solution of the Poisson problem \eqref{problem}
is given by the Green function $G(x,y)$, namely,
$$
u(x) = \int_\Omega G(x,y) \, f(y)dy.
$$

In the next lemma we state some  estimates for $G$ and its derivatives.

\begin{lemma}
If $\Omega$ is a convex polygonal or polyhedral domain, there exist positive constants $C$
and $\gamma$ such that
\begin{equation}
\label{cotaG 1}
|\partial_{y_j}G(x,y)|\le C |x-y|^{1-n}
\end{equation}
and
\begin{equation}
\label{ddGeval}
|\partial_{x_i}\partial_{y_j}G(x,y)-\partial_{x_i}\partial_{y_j}G(\bar{x},y)|\le C|x-\bar{x}|^\gamma
(|x-y|^{-n-\gamma}+|\bar{x}-y|^{-n-\gamma}).
\end{equation}
\end{lemma}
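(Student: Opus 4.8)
The plan is to establish both Green function estimates by combining the known pointwise bounds for the Green function itself with interior and boundary regularity estimates, lifting these to estimates on the first and mixed second derivatives via scaling (Caccioppoli-type) arguments. The key structural input for a convex polygonal or polyhedral domain is the global pointwise bound on $G$ together with estimates on its gradient; these are available in the literature (for instance via the maximum principle and the known regularity of the Poisson problem on convex polytopes), and I would cite them rather than reprove them.

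For the first estimate \eqref{cotaG 1}, I would proceed by a rescaling argument centered at the singularity. Fix $x,y\in\Omega$ and set $R=|x-y|$. Away from the diagonal, $z\mapsto G(x,z)$ is harmonic in $z$ on the ball $B(y,R/2)$ (since $x$ lies outside this ball), so interior gradient estimates for harmonic functions give
$$
|\nabla_y G(x,y)|\le \frac{C}{R}\sup_{B(y,R/2)}|G(x,\cdot)|.
$$
Combining this with the pointwise Green function bound $|G(x,z)|\le C|x-z|^{2-n}$ (valid in dimension $n=3$; the dimension $n=2$ requires the logarithmic bound and is handled analogously) and noting $|x-z|\sim R$ on the ball yields $|\nabla_y G(x,y)|\le C R^{1-n}=C|x-y|^{1-n}$. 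Near the boundary one uses boundary versions of the same estimate together with the vanishing of $G$ on $\partial\Omega$.

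For the H\"older estimate \eqref{ddGeval} on the mixed second derivatives, the approach is the same in spirit but one level higher in differentiation: I would apply interior Schauder (or equivalently, interior derivative) estimates to the function $z\mapsto \partial_{y_j}G(x,z)$, which is still harmonic in $z$ away from the diagonal, to control $\partial_{x_i}\partial_{y_j}G$ and then its H\"older seminorm in the $x$ variable. Concretely, a standard interpolation gives, for a harmonic function $h$ on $B(x,\rho)$, a bound on the H\"older seminorm $[\nabla h]_{C^\gamma}$ over a slightly smaller ball in terms of $\rho^{-1-\gamma}\sup|h|$; applying this with $\rho\sim|x-y|$ and using the gradient bound \eqref{cotaG 1} produces the term $|x-\bar x|^\gamma|x-y|^{-n-\gamma}$. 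The symmetric term $|x-\bar x|^\gamma|\bar x-y|^{-n-\gamma}$ arises by repeating the estimate centered at $\bar x$, and the two cases $|x-\bar x|\ge \tfrac12\min(|x-y|,|\bar x-y|)$ and $|x-\bar x|<\tfrac12\min(\cdots)$ are separated so that in the first (far) regime one simply adds the two individual second-derivative bounds, while in the second (near) regime the genuine H\"older continuity estimate applies.

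The main obstacle I anticipate is extracting the correct value of $\gamma$ and the validity of the H\"older estimate \emph{up to and along the boundary} of a convex polytope: at the reentrant features the interior Schauder machinery does not apply directly, and one must invoke the sharper regularity theory for the Green function on convex polygonal/polyhedral domains (where the interior angles are less than $\pi$, guaranteeing $H^2$ regularity and a positive H\"older exponent for the gradient). In the smooth-domain case these bounds follow from \cite[Theorem 3.3]{Krasovskii} for general elliptic operators, so the crux is to locate or adapt the corresponding statement for the Laplacian on convex polytopes; I would treat the existence of such estimates as the known input cited at the start of the section and organize the proof so that the scaling reductions above convert that input into the stated bounds \eqref{cotaG 1} and \eqref{ddGeval}.
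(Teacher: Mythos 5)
Your treatment of \eqref{cotaG 1} is acceptable in spirit: the paper simply cites Fromm's gradient estimate for arbitrary convex domains (\cite{Fromm}) and uses the symmetry of $G$, and your interior-scaling sketch plus a citation for the boundary behaviour amounts to the same thing. The genuine gap is in your argument for \eqref{ddGeval}, and it sits exactly where the whole difficulty of the lemma lies. The interior scaling/Schauder argument you describe --- derivative and H\"older estimates for the harmonic function $x\mapsto\partial_{y_j}G(x,y)$ on a ball of radius $\rho\sim|x-y|$ --- only works when that ball is contained in $\Omega$, i.e.\ when $\mathrm{dist}(x,\partial\Omega)\gtrsim|x-y|$. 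When $x$ (or $\bar x$) is close to an edge or vertex of the polytope relative to $|x-y|$, no interior ball of the right size exists, and boundary Schauder theory is unavailable because the boundary is not $C^{1,\gamma}$ there. You acknowledge this obstacle but then propose to ``invoke the sharper regularity theory'' as an unspecified known input; since the behaviour near the corners \emph{is} the content of \eqref{ddGeval}, this amounts to citing the lemma in order to prove it. Note also that the inputs you gesture at are of the wrong type: a convex polytope has no reentrant corners (that is precisely what convexity rules out --- the corners are the nonsmooth points, not reentrant ones), and $H^2$ regularity of solutions does not imply a pointwise H\"older bound on $\partial_{x_i}\partial_{y_j}G$ with the scaling $|x-y|^{-n-\gamma}$.

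What the paper actually does in the nonsmooth region is quantitative and cannot be replaced by soft regularity statements: for convex polyhedra it cites the estimate of \cite{GLRS}, and for convex polygons it proves \eqref{ddGeval} from the Maz'ya--Rossmann corner asymptotics (\cite{MR}, \cite{KMR_Spectral}), namely weighted bounds of the form $|D_x^\alpha D_y^\beta G(x,y)|\le C\rho_k(x)^{\sigma_k-|\alpha|-\varepsilon}\rho_k(y)^{-\sigma_k-|\beta|+\varepsilon}$ (and symmetric variants) near each vertex $x^{(k)}$, where $\sigma_k=\pi/\omega_k>1$ by convexity. The admissible exponent is then $0<\gamma<\min_k(\sigma_k-1-\varepsilon)$, and the proof is a case analysis comparing $|x-\bar x|$, $|x-y|$ and the distances to the corners, including a mean value argument (hence third-derivative corner bounds) in the regime where $x$ and $\bar x$ are far from both $y$ and the corner. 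Your far/near splitting and your use of \eqref{cotaG 1} reproduce the easy interior part of this analysis, but without the corner estimates --- or at least an explicit citation to a result that contains them, such as the one the paper uses for polyhedra --- the proof is incomplete precisely for the domains the lemma is about.
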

\begin{proof}
For arbitrary convex domains it is proved in \cite[Proposition 1]{Fromm} that
$|\partial_{x_j}G(x,y)|\le C |x-y|^{1-n}$. Then, \eqref{cotaG 1} follows by using the
symmetry of $G(x,y)$.

Inequality \eqref{ddGeval} is proved in \cite[equation (1.4)]{GLRS} for a convex polyhedral domain.
We give a brief proof of that estimate in the case of a convex polygon $\Omega$, which is based on \cite{GLRS}.

To simplify notation, in what follows we set
 $I := |\partial_{x_i}\partial_{y_j}G(x,y)-\partial_{x_i}\partial_{y_j}G(\bar{x},y)|$.

Let  $x^{(k)}$, $k=1,\dots, K$ be the vertices of $\Omega$. We denote
$\rho_k(x) = \textrm{dist}(x,x^{(k)})$ and $\mathcal{V}_k = B(x^{(k)},\eta)\cap\Omega$
a neighborhood of $x^{(k)}$ for some fixed $\eta$ sufficiently small,
to guarantee that $\mathcal{V}_i \cap \mathcal{V}_j=\emptyset$ whenever $i\neq j$.
If $\omega_k$ is the interior  angle on $x^{(k)}$, we take $\sigma_k = \frac{\pi}{\omega_k}$.  Observe that the convexity of $\Omega$ implies $\sigma_k>1$ for every $k$.

We will make use of the following known estimates for the derivatives of $G$:
\begin{equation}\label{ddG}
  |\partial_{x_i}\partial_{y_j}G(x,y)|\le C|x-y|^{-2}
\end{equation}
(see \cite[Proposition 1]{Fromm}). Moreover, for $x,y\in\mathcal{V}_k$, we have (see \cite{MR} and \cite{KMR_Spectral}):
\begin{equation}\label{MR1}
  |D_x^{\ga} D_y^{\gb} G(x,y)|\le C \rho_k(x)^{\gs_k-|\ga|-\gve}\rho_k(y)^{-\gs_k-|\gb|+\gve}, \quad \textrm{if}\, \rho_k(x)<\rho_k(y)/2,
\end{equation}
\begin{equation}\label{MR2}
  |D_x^{\ga} D_y^{\gb} G(x,y)|\le C \rho_k(x)^{-\gs_k-|\ga|+\gve}\rho_k(y)^{\gs_k-|\gb|-\gve}, \quad \textrm{if}\, \rho_k(x)>2\rho_k(y).
\end{equation}
Finally, if $x\in\mathcal{V}_k$ and $y\in\mathcal{V}_\ell$ for $\ell\neq k$:
\begin{equation}\label{MR3}
  |D_x^\ga D_y^\gb G(x,y)|\le C \rho_k(x)^{\sigma_k-|\ga|-\gve}\rho_\ell(y)^{\sigma_\ell-|\gb|-\gve}.
\end{equation}
We fix $\gve>0$ such that $\gs_k-1-\gve>0$ for every $k$, and $\gamma$  such that $0<\gamma<\gs_k-1-\gve$ for every $k$.

 Observe that, since the singularities lie on the corners of the domain,
 it is enough to consider $x\in B(x^{(k)},\frac{\eta}{2})\cap\Omega$. We take $M>0$ a fixed constant satisfying some
  restrictions that we shall state later, and consider three main cases.

\begin{enumerate}
\item[Case 1:] $|x-y|\le M|x-\bar{x}|$.

Applying the triangle inequality and \eqref{ddG} we obtain
\begin{align*}
I \le C\Big( |\partial_{x_i}\partial_{y_j}G(x,y)|+|\partial_{x_i}\partial_{y_j}G(\bar{x},y)| \Big)
\le C\big(|x-y|^{-2}+|\bar{x}-y|^{-2}\big),
\end{align*}
and \eqref{ddGeval} follows, since  $|x-y|<M|x-\bar x|$, and $|\bar x-y|<(M+1)|x-\bar x|$.

\item[Case 2:] $|x-y|>M|x-\bar{x}|>\rho_k(x)$.

We have that
$\rho_k(\bar{x})\le \rho_k(x)+|x-\bar{x}|<(1+M)|x-\bar{x}|<(1+\frac{1}{M})|x-y|$.

Observe that, since $\rho_k(\bar x)< |x-\bar x|+ \frac{\eta}{2}< \frac{diam(\O)}{M}+ \frac{\eta}{2}$, taking $M$
sufficiently large we may assume that  $\bar{x}\in \mathcal{V}_k$.

If $y\notin\mathcal{V}_k$, we begin considering the case $y\in\mathcal{V}_\ell$ for some $\ell\neq k$. Then, applying the triangle inequality and \eqref{MR3}, and recalling that $0<\gamma<\sigma_k-1-\gve$, we obtain
\begin{align*}
  I &\le C \rho_\ell(y)^{\sigma_\ell-1-\gve} \big(\rho_k(x)^{\sigma_k-1-\gve}+\rho_k(\bar{x})^{\sigma_k-1-\gve}\big)\\ &\le C|x-\bar{x}|^{\sigma_k-1-\gve}\le C|x-\bar{x}|^\gamma,
\end{align*}
and the result follows recalling that $|x-y|>\frac{\eta}{2}$. Observe that the factor depending on $\rho_\ell(y)$ is directly bounded by a constant, since $\gs_\ell-1-\gve>0$. If $y$ is far from all the corners, this factor becomes constant and the same estimate holds.

If $y\in\mathcal{V}_k$ we separate the analysis in three subcases:
\begin{itemize}
  \item If $\rho_k(x)<\rho_k(y)/4$, we have that $|x-y|<\rho_k(x)+\rho_k(y)<\frac{5}{4}\rho_k(y)$, and $\rho_k(\bar{x})<\big(\frac{1}{4}+\frac{5}{4M})\rho_k(y).$
  We take $M>5$, so $\rho_k(\bar{x})<\rho_k(y)/2$ (therefore, \eqref{MR1} holds for $\bar{x}$). Finally, observe that $|x-\bar{x}|\le \rho_k(x)+\rho_k(\bar{x})\le \frac{3}{4}\rho_k(y)$. Then, applying \eqref{MR1} we obtain
  \begin{align*}
    I &\le C \rho_k(y)^{-\gs_k-1+\gve}\big(\rho_k(x)^{\gs_k-1-\gve}+\rho_k(\bar{x})^{\gs_k-1-\gve})\\
     &\le C\rho_k(y)^{-2-\gamma}\rho_k(y)^{\gamma-(\gs_k-1-\gve)}|x-\bar{x}|^{\gs_k-1-\gve} \\
     &\le C|x-y|^{-2-\gamma}|x-\bar{x}|^{\gamma-(\gs_k-1-\gve)}|x-\bar{x}|^{\gs_k-1-\gve} \\
     &= C|x-y|^{-2-\gamma}|x-\bar{x}|^{\gamma}.
  \end{align*}
  \item If $\rho_k(x)>4\rho_k(y)$, we have that $|x-y|\le \rho_k(x)+\rho_k(y)\le \frac{5}{4}\rho_k(x)$, and
  $\rho_k(x)\le|x-\bar{x}|+\rho_k(\bar{x})\le \frac{1}{M}|x-y|+\rho_k(\bar{x})\le \frac{5}{4M}\rho_k(x)+\rho_k(\bar{x})$.
  Combining these two estimates we obtain $\rho_k(y)\le \frac{M}{4M-5}\rho_k(\bar{x})$, which implies that \eqref{MR2} holds for $\bar{x}$ (provided $M>\frac52$). Hence, we have
  \begin{align*}
    I&\le C\rho_k(y)^{\gs_k-1-\gve}\big(\rho_k(x)^{-\gs_k-1+\gve}+\rho_k(\bar{x})^{-\gs_k-1+\gve}\big)\le C\rho_k(x)^{-2} \\
    &\le C \rho_k(x)^{-2-\gamma}\rho_k(x)^\gamma\le |x-y|^{-2-\gamma}|x-\bar{x}|^\gamma.
  \end{align*}
  \item If $\rho_k(y)/4\le\rho_k(x)\le4\rho_k(y)$ we have that $|x-y|\le\rho_k(x)+\rho_k(y)\le 5\rho_k(x)\le 5M|x-\bar{x}|$. Thanks to \eqref{ddG},
  \begin{align*}
    I &\le C\big(|x-y|^{-2}+|\bar{x}-y|^{-2}\big) \le C|x-y|^{-2-\gamma}|x-y|^{\gamma}\\ &\le C|x-y|^{-2-\gamma}|x-\bar{x}|^{\gamma}.
  \end{align*}
\end{itemize}

\item[Case 3:] $|x-y|>M|x-\bar{x}|$ and $\rho_k(x)>M|x-\bar{x}|$.

We use a mean value argument, obtaining
\begin{equation}\label{eqptomedio}
I \le |x-\bar{x}||\nabla_x\partial_{x_i}\partial_{y_j}G(z,y)|
\end{equation}
for $z = x+s(\bar{x}-x)$, $0\le s\le 1$.
Moreover, we have that
  $|x-\bar{x}|\le \rho_k(x) \le \rho_k(z)+|z-x|\le \rho_k(z)+|x-\bar{x}|$
and, consequently, $|x-\bar{x}|\le \frac{1}{M-1}\rho_k(z).$ We also have that
$\rho_k(z)\le \rho_k(x)+|x-z|\le \rho_k(x)+|x-\bar{x}|\le (1+M^{-1})|x-y|.$

As in the previous case, if $y\notin\mathcal{V}_k$, we can assume that $y\in\mathcal{V}_\ell$ for some $\ell\neq k$. In this case \eqref{MR3} gives
\begin{align*}
  I&\le C|x-\bar{x}|\rho_k(z)^{\sigma_k-2-\gve}\rho_\ell(y)^{\sigma_\ell-1-\gve}\le C|x-\bar{x}|\rho_k(z)^{\gs_k-2-\gve} \\
  &\le C|x-\bar{x}||x-\bar{x}|^{\gs_k-2-\gve}= C|x-\bar{x}|^{\gamma}
\end{align*}
and the result follows, since $|x-y|>\frac{\eta}{2}$.

Once again, if $y\in\mathcal{V}_k$, we split the proof in three subcases:
\begin{itemize}
  \item If $\rho_k(x)<\rho_k(y)/4$, recall that $|x-y|<\frac{5}{4}\rho_k(y)$, and $\rho_k(z)<\big(\frac{1}{4}+\frac{5}{4M})\rho_k(y).$
 Then, applying \eqref{MR1} we obtain
  \begin{align*}
    I &\le C |x-\bar{x}|\rho_k(z)^{\gs_k-2-\gve}\rho_k(y)^{-\gs_k-1+\gve} \\
     &\le C|x-\bar{x}|^\gamma \rho_k(z)^{\gs_k-1-\gve-\gamma}\rho_k(y)^{-\gs_k-1+\gve} \\
     &\le C|x-\bar{x}|^\gamma \rho_k(y)^{-2-\gamma}\le C|x-\bar{x}|^\gamma|x-y|^{-2-\gamma}.
  \end{align*}
  \item If $\rho_k(x)>4\rho_k(y)$, we have  $|x-y|\le\frac{5}{4}\rho_k(x)\le \frac{5}{4}\big(\rho_k(z)+|x-z|\big)\le \frac{5M}{4(M-1)}\rho_k(z)$, and
  $\rho_k(y)\le \frac{M}{4(M-1)}\rho_k(z)$. Applying \eqref{MR2} we have
  \begin{align*}
    I&\le C|x-\bar{x}|\rho_k(z)^{-\gs_k-2+\gve}\rho_k(y)^{\gs_k-1-\gve}\\
    &\le C|x-\bar{x}|^\gamma \rho_k(z)^{-\gs_k-1+\gve-\gamma}\rho_k(y)^{\gs_k-1-\gve}\\
    &\le C|x-\bar{x}|^\gamma \rho_k(z)^{-1-\gamma}\le C|x-\bar{x}|^\gamma|x-y|^{-2-\gamma}.
  \end{align*}
  \item If $\rho_k(y)/4\le\rho_k(x)\le4\rho_k(y)$ we have that $|x-y|\le|x-z|+|z-y|\le \frac{1}{M-1}|x-y|+|z-y|$, which leads to $|x-y|\le\frac{M-1}{M}|z-y|$. Here we need an additional estimate given in \cite{MR}, namely,
  $$|D_x^{\alpha}D_y^{\beta}G(z,y)|\le C|x-y|^{-|\alpha|-|\beta|}$$
  that holds if  $\rho_k(y)/5<\rho_k(z)<5\rho_k(y)$ (valid for $z$ if $M>5$). Therefore, we have
  \begin{align*}
    I &\le C|x-\bar{x}||z-y|^{-3}\le C|x-\bar{x}|^{\gamma}|z-y|^{-2-\gamma}
  \end{align*}
 and the result follows.
\end{itemize}
\end{enumerate}
\end{proof}

Given $w\in A_p$ we consider Problem \ref{auxproblem} with $\bq\in \bL^p_w(\Omega)$. Recalling that $G(x,0)=0$, we have
\begin{equation}
\label{representacion}
u(x) = \int_\Omega G(x,y) \, \mbox{div}\bq(y)dy
= -\int_\Omega \nabla_yG(x,y)\cdot\bq(y)dy.
\end{equation}
Here the first integral has to be understood in a weak sense while
the second one is well-defined. Indeed, it is not difficult to see
that $L^p_w(\Omega)\subset L^1(\Omega)$. Therefore, that $\nabla_yG(x,y)\cdot\bq(y)$
is an integrable function follows from  estimate (\ref{cotaG 1}).

We will use the following unweighted known a priori estimate.

\begin{lemma}
Let  $\Omega$ be a convex domain and $u$ be the solution of problem
\eqref{auxproblem} then, for  $1<p<\infty$, the following estimate holds,
\begin{equation}
\label{sin peso}
\|\nabla u\|_{\bL^p}\le C \|\bq\|_{\bL^p}
\end{equation}
\end{lemma}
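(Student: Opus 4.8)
The plan is to realize the solution operator $T:\bq\mapsto\nabla u$ as a Calderón--Zygmund singular integral operator and to deduce \eqref{sin peso} from its $L^2$ boundedness together with the kernel estimates already available. Differentiating the representation \eqref{representacion} under the integral sign, at least for smooth compactly supported $\bq$, gives
\begin{equation}
\partial_{x_i} u(x) = -\int_\Omega \partial_{x_i}\nabla_y G(x,y)\cdot\bq(y)\,dy,
\end{equation}
so that $T$ has the matrix-valued kernel $K(x,y)=-\nabla_x\nabla_y G(x,y)$. First I would record that this kernel satisfies the two standard Calderón--Zygmund conditions: the size bound $|K(x,y)|\le C|x-y|^{-n}$ (the $n$-dimensional analogue of \eqref{ddG}) and the Hörmander smoothness condition \eqref{ddGeval}, which controls $|K(x,y)-K(\bar x,y)|$. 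Smoothness in the $y$ variable follows from \eqref{ddGeval} and the symmetry $G(x,y)=G(y,x)$, which makes $K$ symmetric under interchanging $(x,i)\leftrightarrow(y,j)$.

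The second step is the $L^2$ estimate, which is elementary and needs no kernel information: testing the weak form of \eqref{auxproblem} with $u$ itself and integrating the divergence by parts (using $u=0$ on $\partial\Omega$) gives
\begin{equation}
\int_\Omega|\nabla u|^2\,dx = -\int_\Omega\bq\cdot\nabla u\,dx \le \|\bq\|_{\bL^2}\,\|\nabla u\|_{\bL^2},
\end{equation}
hence $\|\nabla u\|_{\bL^2}\le\|\bq\|_{\bL^2}$. With the $L^2$ bound and the size and Hörmander conditions in hand, I would invoke the Calderón--Zygmund theorem on the space of homogeneous type $(\Omega,dx,|\cdot|)$ (as in \cite{Duoandikoetxea}): such an operator is of weak type $(1,1)$, so by Marcinkiewicz interpolation it is bounded on $L^p$ for $1<p\le2$, and by duality---the formal adjoint is an operator of the same Calderón--Zygmund type, by the symmetry of $G$---it is bounded on $L^p$ for $2\le p<\infty$. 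This yields \eqref{sin peso}.

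The main obstacle is not the soft functional analysis but making the singular-integral interpretation rigorous: one must justify differentiating under the integral sign past the non-integrable diagonal singularity of $K$, interpreting $T$ as a principal value and identifying it with the distributional gradient of $u$ (the first integral in \eqref{representacion} being only weakly defined). A standard truncation and density argument starting from $\bq\in C_0^\infty(\Omega)$ handles this. A second point is that the smoothness estimate \eqref{ddGeval} is established in the preceding lemma only for convex polygonal or polyhedral $\Omega$, so for a general convex domain I would instead invoke the corresponding $W^{1,p}$ regularity directly from the literature (for instance \cite{Fromm}), which is precisely why the statement is flagged as \emph{known}.
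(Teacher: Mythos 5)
Your proposal is correct in substance, but it takes a genuinely different route from the paper: the paper's entire proof of this lemma is the citation ``See \cite{Fromm}'' --- the unweighted $W^{1,p}$ estimate for convex domains is quoted as a known result, not reproved. Your Calder\'on--Zygmund reconstruction (the $L^2$ energy bound $\|\nabla u\|_{\bL^2}\le\|\bq\|_{\bL^2}$, the kernel size bound, the H\"ormander smoothness condition from \eqref{ddGeval} together with the symmetry of $G$, then weak $(1,1)$, Marcinkiewicz interpolation and duality) is sound, and it is in fact the unweighted ($w\equiv 1$) template of what the paper itself does in the weighted setting: Lemma \ref{L:Msharp} is exactly the sharp-maximal-function version of the same near/far decomposition, with \eqref{ddGeval} controlling the far part. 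Two points of comparison are worth making. First, your self-contained argument is only available where \eqref{ddGeval} is, namely for convex \emph{polygonal or polyhedral} domains (the paper proves it for polygons and cites \cite{GLRS} for polyhedra); for an arbitrary convex domain, which is how the lemma is stated, you must --- as you yourself acknowledge --- fall back on \cite{Fromm}, at which point your proof and the paper's coincide. Since the paper's main results are anyway restricted to convex polytopes, your argument does cover the case actually used. Second, your route has a structural advantage: the paper's weighted argument \emph{consumes} \eqref{sin peso} (with exponent $s$ close to $1$) to control the near part $\bq_1$ in Lemma \ref{L:Msharp}, whereas your near part is handled purely by the $L^2$ energy estimate, so there is no circularity; chaining your proof with Lemma \ref{L:Msharp} would make Section \ref{continuous case} self-contained modulo the Green function estimates of the first lemma. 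The one step you should not gloss over is the standard but nontrivial justification that the operator $\bq\mapsto\nabla u$, a priori defined only through the weak formulation, agrees off the diagonal with integration against the kernel $-\nabla_x\nabla_y G$; in the modern formulation of the Calder\'on--Zygmund theorem (as in \cite{Duoandikoetxea}) this off-support representation plus $L^2$ boundedness is all that is required, so no principal-value identification is actually needed.
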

\begin{proof}
See \cite{Fromm}.
\end{proof}

The argument given in \cite{CD_Apestimate} makes use of the following inequality
proved in \cite{DRS}.

\begin{lemma}
    \label{L:f-fOmega}
    For $f\in L^1_{loc}(\O)$, $w\in A_p$ and $f_\O$ the mean value of $f$ over $\O$, we have
    $$
    \|f-f_\O\|_{L^p_{w}}\le C \|\cM^{\#}_{\Omega}f\|_{L^p_{w}}.
    $$
\end{lemma}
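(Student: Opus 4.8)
The plan is to prove this as a local, weighted version of the Fefferman--Stein sharp-maximal inequality. The natural quantity to compare $f-f_\O$ with is the local Hardy--Littlewood maximal function
$$
\cM_\O g(x) = \sup_{\O\supset Q\ni x}\frac{1}{|Q|}\int_Q |g|,
$$
because $|g(x)|\le \cM_\O g(x)$ for a.e.\ $x$ by the Lebesgue differentiation theorem, so it suffices to bound $\|\cM_\O(f-f_\O)\|_{L^p_w}$ by $\|\cM^{\#}_\O f\|_{L^p_w}$. Since $w\in A_p\subset A_\infty$, the weight enjoys the quantitative property that there are constants $C,\delta>0$ with $w(E)\le C(|E|/|Q|)^\delta w(Q)$ whenever $E\subset Q$. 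This is the only feature of the weight that will be used, and it is precisely what lets one pass from Lebesgue-measure estimates to weighted ones.

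The heart of the matter is a good-$\lambda$ inequality: there exist $C>0$ and $\delta>0$ such that, for every $\lambda>0$ and every sufficiently small $\gamma>0$,
$$
w\big(\{\cM_\O(f-f_\O) > 2\lambda,\ \cM^{\#}_\O f \le \gamma\lambda\}\big) \le C\gamma^{\delta}\, w\big(\{\cM_\O(f-f_\O) > \lambda\}\big).
$$
To prove it I would perform a Calder\'on--Zygmund (stopping-time) decomposition of the level set $\{\cM_\O(f-f_\O)>\lambda\}$ into maximal dyadic cubes $\{Q_i\}$ contained in $\O$. On each $Q_i$ one has $\frac{1}{|Q_i|}\int_{Q_i}|f-f_\O| \approx \lambda$ by maximality, and one shows that the subset of $Q_i$ on which simultaneously $\cM_\O(f-f_\O)>2\lambda$ and $\cM^{\#}_\O f \le \gamma\lambda$ has Lebesgue measure at most $C\gamma|Q_i|$; the sharp-maximal bound on $Q_i$ controls the oscillation of $f$ about its average, which is what forces this smallness. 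Summing over $i$ and applying the $A_\infty$ estimate on each $Q_i$ converts the factor $C\gamma$ into the weighted factor $C\gamma^{\delta}$, giving the displayed inequality.

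With the good-$\lambda$ inequality in hand the conclusion follows by the usual integration in $\lambda$: writing $\|\cM_\O(f-f_\O)\|^p_{L^p_w} = p\int_0^\infty \lambda^{p-1} w(\{\cM_\O(f-f_\O)>\lambda\})\,d\lambda$, substituting the inequality, and choosing $\gamma$ small enough that the resulting term $C\gamma^{\delta}$ can be absorbed on the left. To make this absorption rigorous and avoid circularity, I would first work with the truncation $\min(\cM_\O(f-f_\O),N)$ so that the integral is a priori finite, derive the bound with a constant independent of $N$, and then let $N\to\infty$ by monotone convergence.

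The main obstacle is the local Calder\'on--Zygmund step, namely producing a decomposition into cubes of $\O$ on which both the stopping-time and the sharp-maximal estimates hold. For a single cube this is classical, but in the definitions of $\cM_\O$ and $\cM^{\#}_\O$ the cubes are constrained to lie inside $\O$, so one must either use a Whitney decomposition of $\O$ or exploit that $\O$, being convex and hence a John domain, admits chains connecting an arbitrary cube to a fixed central cube along which the difference of averages is controlled by $\cM^{\#}_\O f$. Reconciling the global recentering by $f_\O$ with the purely local stopping-time cubes is the delicate point; once this is handled, everything else is the standard Fefferman--Stein machinery adapted to the weight through the $A_\infty$ property.
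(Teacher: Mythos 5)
Your strategy (a local weighted Fefferman--Stein inequality via a good-$\lambda$ estimate plus the $A_\infty$ property) is the right family of ideas, but be aware that the paper does not prove this lemma at all: it is quoted as an inequality proved in \cite{DRS}, and the content of that reference is precisely a decomposition/chaining technique for John domains (every bounded convex domain is a John domain). That chaining is not a routine adaptation of the standard machinery; it is the substance of the result, and it is exactly the step your proposal names as ``the delicate point'' and then leaves unexecuted. So the proposal, as written, has a genuine gap.

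To make the gap concrete: in the good-$\lambda$ step you take maximal dyadic cubes $Q_i$ (contained in $\O$) of the level set and claim $\frac{1}{|Q_i|}\int_{Q_i}|f-f_\O|\approx\lambda$ ``by maximality''. The upper bound in that claim comes, classically, from the parent cube having admissible average $\le\lambda$; but for stopping cubes near $\partial\O$ the parent cube is not contained in $\O$, hence is not admissible for $\cM_\O$, and no upper bound on the average follows. Equivalently: the hypothesis $\cM^{\#}_\O f\le\gamma\lambda$ on $Q_i$ controls the oscillation of $f$ about $f_{Q_i}$, while the function whose level sets you are decomposing is recentered at the global mean $f_\O$; in the interior the discrepancy $|f_{Q_i}-f_\O|\le 2^n\lambda$ is supplied by the parent cube, but near the boundary it can only be estimated by chaining $Q_i$ through a sequence of admissible cubes to a central one. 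That chain passes through cubes on which nothing is assumed about $\cM^{\#}_\O f$ (they generally lie outside the set where your hypothesis holds), so the accumulated telescoping error $\sum_j|f_{Q_{j+1}}-f_{Q_j}|$ does not slot into a pointwise good-$\lambda$ argument. This is why the proofs in the literature do not run a good-$\lambda$ inequality directly on $\O$: one proves the inequality locally on each member of a Boman chain cover of $\O$ (where all relevant subcubes are admissible and the recentering constant is the cube's own mean), and then uses the chain condition to sum the terms $|f_W-f_\O|^p\,w(W)$ over the cover in $L^p_w$, not pointwise. Until you supply that summation argument --- including how the overlapping chains are controlled with a constant depending only on $[w]_{A_p}$ and the John constant of $\O$ --- the proof is incomplete, and the specific claims made in your Calder\'on--Zygmund step are false for cubes touching the boundary.
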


In what follows we make use of the fact that $C_0^\infty(\Omega)$ is dense in $L^p_w(\Omega)$ and, therefore, we can assume
that $\bq$ is smooth. Hence, pointwise values of the derivatives of $u$ are well-defined.

\begin{lemma}
\label{L:Msharp}
Let  $\Omega$ be a convex polygonal or polyhedral domain and $u$ be the
solution of problem \eqref{auxproblem}. Then, for any $s>1$, we have
$$
\cM^\#_\Omega(|\nabla u|)(\bar{x}) \le C (\cM|\bq|^s)^\frac{1}{s}(\bar{x})
$$
for all $\bar{x}\in\Omega$.
\end{lemma}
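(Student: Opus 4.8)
The plan is to follow the Calderón–Zygmund scheme via the sharp maximal function used in \cite{CD_Apestimate}. Fix a cube $Q\subset\O$ with $\bar x\in Q$, center $x_Q$ and side length $\ell$, and let $Q^\ast$ be a fixed dilate of $Q$ (say of side $4\sqrt{n}\,\ell$). I decompose $\bq=\bq_1+\bq_2$ with $\bq_1=\bq\,\chi_{Q^\ast}$ and $\bq_2=\bq\,\chi_{\O\setminus Q^\ast}$, and let $u_1,u_2$ be the solutions of \eqref{auxproblem} with data $\mathrm{div}\,\bq_1,\mathrm{div}\,\bq_2$, so that $u=u_1+u_2$ by linearity. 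Since for any constant vector $\mathbf c$ one has $\big||\nabla u(x)|-|\mathbf c|\big|\le|\nabla u(x)-\mathbf c|$, and hence $\frac1{|Q|}\int_Q\big||\nabla u|-(|\nabla u|)_Q\big|\le\frac2{|Q|}\int_Q|\nabla u-\mathbf c|$, it suffices to bound $\frac1{|Q|}\int_Q|\nabla u-\mathbf c|$ by the right-hand side for a convenient choice of $\mathbf c$; I will take $\mathbf c=\nabla u_2(x_Q)$.

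For the local part I would invoke the unweighted estimate \eqref{sin peso} with $p=s$: by Hölder's inequality and \eqref{sin peso},
\begin{equation*}
\frac1{|Q|}\int_Q|\nabla u_1|\le\Big(\frac1{|Q|}\int_\O|\nabla u_1|^s\Big)^{1/s}\le\frac{C}{|Q|^{1/s}}\Big(\int_{Q^\ast}|\bq|^s\Big)^{1/s}\le C\big(\cM|\bq|^s(\bar x)\big)^{1/s},
\end{equation*}
where the last step uses $|Q^\ast|\sim|Q|$ and $\bar x\in Q^\ast$. This is the only step requiring $s>1$, since \eqref{sin peso} fails at $p=1$.

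For the far part, observe that $\bq_2$ vanishes on the interior of $Q^\ast$, so $u_2$ is harmonic there and $\nabla u_2$ is smooth on $Q$; thus $\mathbf c=\nabla u_2(x_Q)$ is well defined and, from \eqref{representacion}, for $x\in Q$
\begin{equation*}
\partial_{x_i}u_2(x)-\partial_{x_i}u_2(x_Q)=-\int_{\O\setminus Q^\ast}\big(\partial_{x_i}\nabla_y G(x,y)-\partial_{x_i}\nabla_y G(x_Q,y)\big)\cdot\bq(y)\,dy.
\end{equation*}
Applying \eqref{ddGeval}, using $|x-x_Q|\le C\ell$ and $|x-y|\sim|x_Q-y|\gtrsim\ell$ for $y\notin Q^\ast$, I would decompose $\O\setminus Q^\ast$ into dyadic shells $2^{k+1}Q^\ast\setminus 2^k Q^\ast$, on which $|x_Q-y|\sim 2^k\ell$, estimate $\int_{2^{k+1}Q^\ast}|\bq|$ by Hölder and the maximal function (each such cube still contains $\bar x$), and sum in $k$. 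The $k$-th term contributes $C\,\ell^\gamma (2^k\ell)^{-\gamma}\big(\cM|\bq|^s(\bar x)\big)^{1/s}$, and since $\gamma>0$ the geometric series $\sum_{k\ge0}2^{-k\gamma}$ converges, giving $|\nabla u_2(x)-\nabla u_2(x_Q)|\le C\big(\cM|\bq|^s(\bar x)\big)^{1/s}$ uniformly for $x\in Q$. Averaging over $Q$ and adding the local bound yields $\frac1{|Q|}\int_Q|\nabla u-\mathbf c|\le C\big(\cM|\bq|^s(\bar x)\big)^{1/s}$, and taking the supremum over all cubes $Q\ni\bar x$ contained in $\O$ proves the lemma.

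The main technical point is the far-field estimate: one must check that the regularity gain $\gamma$ of the kernel furnished by \eqref{ddGeval} exactly absorbs the factor $|x-x_Q|^\gamma\sim\ell^\gamma$ and renders the dyadic sum convergent. The positivity of $\gamma$ — which rests on the convexity of $\O$ through the condition $\sigma_k>1$ underlying \eqref{ddGeval} — is precisely what guarantees convergence of $\sum_k 2^{-k\gamma}$; without it the far contribution would diverge. A secondary, more routine point is the bookkeeping: ensuring the dilated cubes $2^{k+1}Q^\ast$ contain $\bar x$ so that their averages are controlled by $\cM|\bq|^s(\bar x)$, and justifying differentiation under the integral sign in the region $y\notin Q^\ast$ where the kernel is smooth.
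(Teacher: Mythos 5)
Your proof is correct and follows essentially the same route as the paper: the same splitting $\bq=\bq_1+\bq_2$ across a dilated cube, the local bound via H\"older and the unweighted estimate \eqref{sin peso} with exponent $s$, and the far-field bound via \eqref{ddGeval} with a Hedberg-type dyadic summation. The only cosmetic differences are that the paper takes the comparison constant to be $\partial_{x_i}u_2(\bar x)$ rather than $\nabla u_2(x_Q)$, works componentwise, and cites Hedberg for the geometric-series step that you write out explicitly.
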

\begin{proof}
We extend $\bq$ by zero outside $\Omega$. Given $\bar{x}\in\Omega$, let $Q\subset\Omega$ be a cube such that
$\bar{x}\in Q$ and let $Q^*$ be
an expansion of $Q$ by a factor $2$. We decompose $\bq = \bq_1+\bq_2$, where $\bq_1 = \chi_{Q^*}\bq$, and call $u_i$
the solution of \eqref{auxproblem} with right hand side given by $\mbox{div}\bq_i$.
It is enough to bound the sharp maximal function of $\partial_{x_i}u$ for any $i$.

It is known that to estimate $\cM^\#_\Omega$, one can replace the average $f_Q$ by any constant $a$.
We take $a=\partial_{x_i}u_2(\bar{x})$, hence,
\begin{align*}
\frac{1}{|Q|}&\int_Q |\partial_{x_i}u(x)-\partial_{x_i}u_2(\bar{x})|dx \\
&\le \frac{1}{|Q|}\int_Q|\partial_{x_i}u_1(x)| dx + \frac{1}{|Q|}
\int_Q |\partial_{x_i}u_2(x)- \partial_{x_i}u_2(\bar{x})| dx =: (i) + (ii).
\end{align*}
Given $s>1$, using  H\"older's inequality, the unweighted estimate (\ref{sin peso}) for
$L^s$, and recalling that $\bq_1$ vanishes outside $\Omega\cap Q^*$, we have
\begin{align*}
(i) \le \left(\frac{1}{|Q|}\int_Q |\partial_{x_i} u_1(x)|^s dx\right)^{\frac{1}{s}}
\le C\left(\frac{1}{|Q|}\int_{Q^*} |\bq_1(x)|^s dx\right)^\frac{1}{s}
\le C(\cM|\bq|^s)^{\frac{1}{s}}(\bar{x}).
\end{align*}

To bound $(ii)$, since $x$ and $\bar{x}$ are outside the support of $\bq_2$, we can take the derivative inside the integral
in the expression for $u_2$ given by \eqref{representacion}, and using (\ref{ddGeval}), we obtain
\begin{align*}
(ii)
&\le\frac{1}{|Q|}\int_Q\int_{\Omega\cap(Q^*)^c}|\partial_{x_i}\nabla_y G(x,y)
-\partial_{x_i}\nabla_y G(\bar{x},y)| |\bq_2(y)| dy dx \\
&\le\frac{C}{|Q|}\int_Q\int_{(Q^*)^c}|x-\bar{x}|^\gamma(|x-y|^{-n-\gamma}+|\bar{x}-y|^{-n-\gamma})|\bq(y)| dy dx
\end{align*}
Now, since $x,\bar{x}\in Q$ and $y\in (Q^*)^c$, we have $|x-y|\sim |\bar{x}-y|\ge\frac{\ell(Q)}{2}$,
where $\ell(Q)$ denotes the length of the edges of $Q$,
and therefore,
$$
(ii) \le C\frac{\ell(Q)^\gamma}{|Q|}\int_Q\int_{(Q^*)^c}\frac{|\bq(y)|}{|\bar{x}-y|^{n+\gamma}} dy dx
\le C\int_{\ell(Q)/2<|\bar{x}-y|}\frac{\ell(Q)^{\gamma}|\bq(y)|}{|\bar{x}-y|^{n+\gamma}} dy
\le C \cM|\bq|(\bar{x}).
$$
where the last inequality follows in a standard way (see \cite[Page 506]{Hedberg}).

But, by  H\"older's inequality,
$\cM|\bq|(\bar{x})\le(\cM|\bq|^s)^\frac{1}{s}(\bar{x})$
and so the lemma is proved. \end{proof}

Now, we are able to prove our main result, namely, the weighted estimate for $\nabla u$.

\begin{thm}
\label{teo principal}
Let  $\O$ be a convex polygonal or polyhedral domain.
Given $1<p<\infty$ and $w\in A_p$, if $\bq\in\bL^p_w(\O)$
and $u$ is the solution of Problem \eqref{auxproblem},
there exists
a constant $C$ depending on $p$, $\O$ and $w$ such that,
$$
\|\nabla u\|_{\bL^p_w}\le C\|\bq\|_{\bL^p_w}.
$$
\end{thm}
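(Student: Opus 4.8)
The plan is to combine the three preceding lemmas with the self-improvement (openness) properties of the Muckenhoupt classes. Writing $f=|\nabla u|$, I would first apply Lemma \ref{L:f-fOmega} and then Lemma \ref{L:Msharp} to obtain, for any fixed $s>1$,
$$
\big\| |\nabla u| - (|\nabla u|)_\O \big\|_{L^p_w} \le C \big\| \cM^\#_\O(|\nabla u|) \big\|_{L^p_w} \le C \big\| (\cM|\bq|^s)^{1/s} \big\|_{L^p_w}.
$$
This reduces the estimate to controlling the right-hand side and, separately, the mean value $(|\nabla u|)_\O$, which the sharp maximal function does not detect.

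For the first term I would exploit that $\bq\in\bL^p_w$ with $w\in A_p$. By the openness of the Muckenhoupt classes (see \cite{Duoandikoetxea}), there is $s>1$ such that $w\in A_{p/s}$; fix this $s$. Then $p/s>1$, so $|\bq|^s\in L^{p/s}_w$ and $\cM$ is bounded on $L^{p/s}_w$, giving
$$
\big\| (\cM|\bq|^s)^{1/s} \big\|_{L^p_w}^p = \big\| \cM(|\bq|^s) \big\|_{L^{p/s}_w}^{p/s} \le C \big\| |\bq|^s \big\|_{L^{p/s}_w}^{p/s} = C \|\bq\|_{\bL^p_w}^p.
$$
Hence $\big\| |\nabla u| - (|\nabla u|)_\O \big\|_{L^p_w} \le C\|\bq\|_{\bL^p_w}$.

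It remains to bound the constant $(|\nabla u|)_\O = |\O|^{-1}\int_\O |\nabla u|$, since by the triangle inequality
$$
\|\nabla u\|_{\bL^p_w} \le \big\| |\nabla u| - (|\nabla u|)_\O \big\|_{L^p_w} + (|\nabla u|)_\O \Big(\int_\O w\Big)^{1/p},
$$
and $\int_\O w<\infty$ because $\O$ is bounded. Here I would fall back on the unweighted a priori estimate \eqref{sin peso}. Applying H\"older's inequality with exponent $p/s_0$ and using that $w^{-1/(p-1)}$ enjoys higher integrability (the reverse H\"older inequality for $A_p$ weights, again \cite{Duoandikoetxea}), I would show that $\bL^p_w(\O)\hookrightarrow \bL^{s_0}(\O)$ for some $s_0>1$, with $\|\bq\|_{\bL^{s_0}}\le C\|\bq\|_{\bL^p_w}$. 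Then \eqref{sin peso} in $L^{s_0}$ yields $\|\nabla u\|_{\bL^{s_0}}\le C\|\bq\|_{\bL^{s_0}}\le C\|\bq\|_{\bL^p_w}$, and a final H\"older step bounds the mean value, $(|\nabla u|)_\O \le |\O|^{-1/s_0}\|\nabla u\|_{\bL^{s_0}}\le C\|\bq\|_{\bL^p_w}$. Combining the two displayed inequalities finishes the proof; throughout one may assume $\bq$ smooth by density of $C_0^\infty(\O)$ in $\bL^p_w(\O)$, which justifies the pointwise arguments of Lemma \ref{L:Msharp}, the inequality then extending to all $\bq\in\bL^p_w(\O)$ by density.

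The main obstacle I anticipate is not the maximal-function term, which is a mechanical consequence of Lemma \ref{L:Msharp} and the $L^{p/s}_w$-boundedness of $\cM$, but the mean-value term $(|\nabla u|)_\O$: the sharp maximal function controls only oscillation, so this zeroth-order piece must be recovered independently. Tying it to \eqref{sin peso} forces the embedding $\bL^p_w\hookrightarrow \bL^{s_0}$, and the existence of admissible exponents $s_0>1$ and $s>1$ with $w\in A_{p/s}$ is precisely where the self-improvement of the class $A_p$ enters in an essential way.
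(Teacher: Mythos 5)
Your proof is correct and follows essentially the same route as the paper's: the same decomposition into oscillation plus mean value, Lemmas \ref{L:f-fOmega} and \ref{L:Msharp} combined with the openness property $w\in A_{p/s}$ and the boundedness of $\cM$ on $L^{p/s}_w$ for the oscillation term, and the unweighted estimate \eqref{sin peso} together with H\"older against the weight for the mean value. The only difference is cosmetic: you package the H\"older/reverse-H\"older step as an embedding $\bL^p_w(\O)\hookrightarrow\bL^{s_0}(\O)$ before invoking \eqref{sin peso}, whereas the paper applies the same inequalities in-line with $s_0=s$.
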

\begin{proof}
We have
$$
\|\nabla u\|_{\bL^p_w}
\le \|\nabla u-(\nabla u)_\O\|_{\bL^p_w}
+ \|(\nabla u)_\O\|_{\bL^p_w}
=: I + II.
$$
Now, it is known that if $w\in A_p$ then $w\in A_{\frac{p}{s}}$ for some $s$ such that $1<s<p$
(see, for example, \cite[Corollary 7.6]{Duoandikoetxea}).
Then, using Lemmas \ref{L:f-fOmega} and \ref{L:Msharp}, and that $\cM$ is bounded on $L^{\frac{p}{s}}_w$,
we obtain
$$
I\le C \|\cM^\#_\Omega(|\nabla u|)\|_{L^p_w}
 \le C \|\c(M|\bq|^s)^\frac{1}{s}\|_{L^p_w}
 \le C \|\bq\|_{\bL^p_w}
$$
Then, to finish the proof it is enough to bound $|(\nabla u)_\Omega|$. Using  H\"older's inequality,
with exponent $s$ in the first inequality and with exponent $p/s$ in the third one, and the a priori estimate (\ref{sin peso})
for the second inequality, we obtain
\begin{align*}
|(\nabla u)_\Omega|
& \le \left(\frac{1}{|\Omega|}\int_\Omega |\nabla  u(x)|^s dx\right)^\frac{1}{s}
\le C\left(\frac{1}{|\Omega|}\int_\Omega|\bq|^s\right)^\frac{1}{s}\\
&\le C\left(\frac{1}{|\Omega|}\int_\Omega |\bq|^p w(x)dx\right)^\frac{1}{p}
\left(\frac{1}{|\Omega|}\int_{\Omega}w(x)^{-\frac{s}{p-s}}\right)^\frac{p-s}{ps}
\end{align*}
and the last integral is finite since $w\in A_{\frac{p}{s}}$.
\end{proof}

Now we can prove the well-posedness of problem \eqref{problem}. This result follows from
Theorem \ref{teo principal} by standard functional analysis arguments.
We give it here for the sake of completeness.
In the proof we will use the following weighted Poincar\'e inequality (see \cite[Ch.2 Section 15]{KO_Hardy}).
If $w\in A_p$ then there exists a constant $C$ such that
\begin{equation}
\label{poincare}
\|v\|_{L^p_w}\le C \|\nabla v\|_{\bL^p_w} \qquad \forall v\in W_{w,0}^{1,p}(\O)
\end{equation}
Given $w\in A_p$ we intrduce its dual weight $w':=w^{-1/(p-1)}$. It is known that
$w'\in A_{p'}$ (see for example \cite{D2}) and that $\bL_w^p(\O)'=\bL^{p'}_{w'}(\O)$.

\begin{corollary}
\label{T:wellposedness}
If $\Omega$ is a convex polygonal or polyhedral domain,
$1<p<\infty$ and $w\in A_p$ then, given $\mu\in(W_{w',0}^{1,p'}(\O))'$
there exists a unique solution of Problem \eqref{problem} satisfying,
\begin{equation}
\label{a priori para u}
\|u\|_{W_w^{1,p}}
\le C\|\mu\|_{(W_{w',0}^{1,p'})'}
\end{equation}
\end{corollary}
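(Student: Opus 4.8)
The plan is to recast \eqref{problem} in variational form and apply the Banach-space generalization of the Lax--Milgram lemma (the Babu\v{s}ka--Ne\v{c}as theorem). I would work on the reflexive spaces $X=W^{1,p}_{w,0}(\Omega)$ and $Y=W^{1,p'}_{w',0}(\Omega)$ (reflexivity following from $1<p<\infty$ together with $w\in A_p$, $w'\in A_{p'}$), with the bilinear form
\[
a(u,v)=\int_\Omega \nabla u\cdot\nabla v\,dx,\qquad u\in X,\ v\in Y,
\]
so that a weak solution of \eqref{problem} is a $u\in X$ with $a(u,v)=\la\mu,v\ra$ for every $v\in Y$. Boundedness of $a$ is immediate: H\"older's inequality in the form of the stated duality $\bL^p_w(\Omega)'=\bL^{p'}_{w'}(\Omega)$ gives $|a(u,v)|\le\|\nabla u\|_{\bL^p_w}\|\nabla v\|_{\bL^{p'}_{w'}}$, and the weighted Poincar\'e inequality \eqref{poincare}, available for both $w$ and $w'$, shows that $\|\nabla\,\cdot\,\|$ is an equivalent norm on $X$ and on $Y$.

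The core of the argument is the inf--sup condition
\[
\inf_{u\neq 0}\ \sup_{v\neq 0}\ \frac{a(u,v)}{\|u\|_{W^{1,p}_w}\,\|v\|_{W^{1,p'}_{w'}}}\ \ge\ \ga>0,
\]
and this is exactly where Theorem \ref{teo principal} enters, applied to the conjugate pair $(p',w')$ --- legitimate because $w\in A_p$ is equivalent to $w'\in A_{p'}$. Given $0\neq u\in X$, I would set $\mathbf{g}=\nabla u$ and take the norming field $\bt=\|\mathbf{g}\|_{\bL^p_w}^{1-p}\,|\mathbf{g}|^{p-2}\mathbf{g}\,w\in\bL^{p'}_{w'}$, for which a short computation using $w'=w^{-1/(p-1)}$ yields $\int_\Omega\mathbf{g}\cdot\bt\,dx=\|\mathbf{g}\|_{\bL^p_w}$ and $\|\bt\|_{\bL^{p'}_{w'}}=1$. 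Let $v$ be the solution of \eqref{auxproblem} with datum $\bq=-\bt$; Theorem \ref{teo principal} for $(p',w')$ gives $v\in Y$ with $\|\nabla v\|_{\bL^{p'}_{w'}}\le C$. Since the weak form of that problem reads $\int_\Omega\nabla v\cdot\nabla\psi\,dx=\int_\Omega\bt\cdot\nabla\psi\,dx$, testing with $\psi=u$ gives $a(u,v)=\int_\Omega\bt\cdot\mathbf{g}\,dx=\|\nabla u\|_{\bL^p_w}$, whence $a(u,v)/\|\nabla v\|_{\bL^{p'}_{w'}}\ge C^{-1}\|\nabla u\|_{\bL^p_w}$; one more application of \eqref{poincare} converts the seminorms into full norms and delivers the inf--sup constant.

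The last hypothesis of the Babu\v{s}ka--Ne\v{c}as theorem is the non-degeneracy condition $\sup_{u\in X}a(u,v)>0$ for every $0\neq v\in Y$, which I would verify by the same device with the roles of $(p,w)$ and $(p',w')$ exchanged: given $v$ with $\mathbf{h}=\nabla v\neq 0$, choose the norming field $\bs\in\bL^p_w$ of $\mathbf{h}$ and solve \eqref{auxproblem} for the pair $(p,w)$ to produce $u\in X$ with $a(u,v)=\|\nabla v\|_{\bL^{p'}_{w'}}>0$. With both conditions in hand, the theorem yields a unique $u\in X$ solving $a(u,v)=\la\mu,v\ra$ for all $v\in Y$, together with the a priori bound \eqref{a priori para u}, the constant there being controlled by the inf--sup constant above. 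The only genuine work is the inf--sup estimate, and the main obstacle is concentrated entirely in its single appeal to Theorem \ref{teo principal} for the conjugate pair; the rest is functional-analytic bookkeeping, so that this corollary is in effect a repackaging of that theorem together with the duality $(\bL^p_w)'=\bL^{p'}_{w'}$ and the Poincar\'e inequality \eqref{poincare}.
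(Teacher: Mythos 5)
Your proposal is correct, but it takes a genuinely different functional-analytic route than the paper. The paper's proof is shorter and avoids the inf--sup machinery altogether: it defines ${\mathcal L}(\nabla v):=-\la \mu,v\ra$ on the subspace of gradient fields inside $\bL^{p'}_{w'}$, uses the Poincar\'e inequality \eqref{poincare} to see that ${\mathcal L}$ is bounded there, extends it by Hahn--Banach to all of $\bL^{p'}_{w'}$, and invokes the duality $(\bL^{p'}_{w'})'=\bL^{p}_{w}$ to produce $\bq\in\bL^p_w$ with $\mbox{div}\,\bq=\mu$ and $\|\bq\|_{\bL^p_w}\le\|\mu\|_{(W_{w',0}^{1,p'})'}$; the solution is then literally the solution of \eqref{auxproblem} with this $\bq$, so existence and the bound \eqref{a priori para u} follow from a single application of Theorem \ref{teo principal} for the pair $(p,w)$ plus \eqref{poincare}. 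You instead verify the Babu\v{s}ka--Ne\v{c}as conditions, which requires Theorem \ref{teo principal} twice --- for $(p',w')$ in the inf--sup condition and for $(p,w)$ in the non-degeneracy condition --- together with the norming-field computation (which is correct: with $\bt=\|\nabla u\|_{\bL^p_w}^{1-p}|\nabla u|^{p-2}\nabla u\, w$ one indeed gets $\|\bt\|_{\bL^{p'}_{w'}}=1$ and $\int_\O \nabla u\cdot\bt=\|\nabla u\|_{\bL^p_w}$). What your route buys is that uniqueness comes packaged automatically with existence and the estimate, whereas the paper's two-line proof leaves uniqueness implicit (it requires essentially your inf--sup argument, i.e.\ testing against a solution of the conjugate auxiliary problem, to rule out nontrivial solutions with $\mu=0$); it is also the formulation closest to Galerkin theory, and notably it is the \emph{correct} version of the inf--sup argument whose flawed form in \cite{D_1d3d} the introduction criticizes --- your conditions are posed on the full spaces $W^{1,p}_{w,0}$, $W^{1,p'}_{w',0}$ rather than on the wrong kernels. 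What the paper's route buys is economy: one application of the key theorem, no abstract BNB theorem, and the representation $\mu=\mbox{div}\,\bq$ makes existence immediate by construction.
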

\begin{proof}
Set ${\mathcal L}(\nabla v):=-\la \mu,v \ra$. Using the Poincar\'e inequality \eqref{poincare}
we have $|{\mathcal L}(\nabla v)|\le \|\mu\|_{(W_{w',0}^{1,p'})'}\|\nabla v\|_{\bL^{p'}_{w'}}$.
Therefore, ${\mathcal L}$ defines a continuous linear functional
on the gradient fields of functions in $W_{w',0}^{1,p'}$ and so,
by the Hahn-Banach theorem, it can be
extended to all $\bL^{p'}_{w'}$. Therefore, there exists $\bq\in\bL^{p}_{w}$ such that
$\|\bq\|_{\bL^{p}_{w}}=\|\mu\|_{(W_{w',0}^{1,p'})'}$ and $\la \bq,\nabla v \ra=-\la \mu,v \ra$. Then
$\mbox{div}\bq=\mu$, and therefore, the existence of $u$ and
the estimate \eqref{a priori para u}
are immediate consequences of Theorem \ref{teo principal} and \eqref{poincare}.
\end{proof}

The results obtained above can be applied to the problem considered in \cite{D_1d3d}.
In that paper the author considers a problem like \eqref{problem} with $\mu$ supported
in a curve contained in a three dimensional domain. He works with a weighted space where the
weight is a power of the distance to the curve.
More generally one can consider
$\Gamma\subset\overline{\O}\subset\R^n$ where $\Gamma$ is a compact set.
We will assume that $\Gamma$ is a $k$-regular set for some $0\le k<n$, namely, there exist constants $C_1,C_2>0$ such that
$C_1 r^k\le {\mathcal H}^k(B(x,r)\cap \Gamma)\le C_2 r^k$ for every
$x\in \Gamma$ and $0<r\le diam(\Gamma)$, where ${\mathcal H}^k$ denotes
the $k$-dimensional Hausdorff measure. Let us remark that $k$ is not necessarily an integer. However,
if $\Gamma$ is smooth then $k$ is the usual dimension.

To simplify notation we introduce $w_\lambda:=dist(x,\Gamma)^{\lambda}$.
It is known that, if $\Gamma$ is a $k$-regular set, then, for $1\le p<\infty$,
\begin{equation}
\label{potencias en Ap}
-(n-k)<\lambda<(n-k)(p-1)
\Longrightarrow
w_\lambda\in A_p
\end{equation}
(see \cite[Lemma 2.3,vi]{FS} or \cite[Appendix B]{AD_Divergence})

\begin{thm}
\label{aplicacion en el continuo}
If $\O$ is a convex polygonal or polyhedral domain,
$\Gamma\subset\overline{\O}$ is a $k$-regular set and $1<p<\infty$ then,
for $-(n-k)<\lambda<(n-k)(p-1)$, given
$\mu\in (W_{w_{-\lambda/(p-1)},0}^{1,p'}(\O))'$
there exists a unique solution $u\in W_{w_\lambda}^{1,p}(\O)$ of Problem \ref{problem} satisfying,
$$
\|u\|_{W_{w_\lambda}^{1,p}}
\le C\|\mu\|_{\left(W_{w_{-\lambda/(p-1)},0}^{1,p'}\right)'}
$$
\end{thm}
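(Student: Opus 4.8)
The plan is to derive Theorem \ref{aplicacion en el continuo} as a direct specialization of Corollary \ref{T:wellposedness}, so that essentially no new analysis is required beyond identifying the weight and its dual. The two facts to verify are that $w_\lambda = dist(x,\Gamma)^\lambda$ belongs to $A_p$ for the prescribed range of $\lambda$, and that the dual weight $w_\lambda'$ coincides with the weight $w_{-\lambda/(p-1)}$ appearing in the source space.

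First I would observe that, since $\Gamma$ is a $k$-regular set, the implication \eqref{potencias en Ap} applies directly: the hypothesis $-(n-k)<\lambda<(n-k)(p-1)$ is exactly the condition guaranteeing $w_\lambda\in A_p$. Thus the weight $w$ of Corollary \ref{T:wellposedness} can be taken to be $w = w_\lambda$.

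Next I would compute the dual weight. By the definition $w' := w^{-1/(p-1)}$ recorded just before Corollary \ref{T:wellposedness}, we have
$$
w_\lambda' = \big(dist(x,\Gamma)^{\lambda}\big)^{-1/(p-1)} = dist(x,\Gamma)^{-\lambda/(p-1)} = w_{-\lambda/(p-1)}.
$$
Hence the source space $(W_{w',0}^{1,p'}(\O))'$ in Corollary \ref{T:wellposedness} is precisely $(W_{w_{-\lambda/(p-1)},0}^{1,p'}(\O))'$, matching the hypothesis imposed on $\mu$. Applying Corollary \ref{T:wellposedness} with $w = w_\lambda$ then yields at once the existence and uniqueness of $u\in W_{w_\lambda}^{1,p}(\O)$ solving \eqref{problem}, together with the stated a priori bound.

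Since the entire argument reduces to a bookkeeping of definitions, I do not anticipate a genuine obstacle. The only points demanding care are purely formal: confirming that the admissible range for $\lambda$ is identical in \eqref{potencias en Ap} and in the theorem, and that the exponent produced by the dual-weight computation is the expected $-\lambda/(p-1)$. One could add the remark that $w_\lambda'\in A_{p'}$ holds automatically, but this is already subsumed in the statement of Corollary \ref{T:wellposedness}.
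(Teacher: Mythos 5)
Your proposal is correct and follows exactly the paper's own argument: the paper proves this theorem by noting that \eqref{potencias en Ap} places $w_\lambda$ in $A_p$ and then invoking Corollary \ref{T:wellposedness}, with the dual-weight identification $w_\lambda' = w_{-\lambda/(p-1)}$ implicit. Your write-up simply makes that bookkeeping explicit.
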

\begin{proof}
In view of \eqref{potencias en Ap} the result is an immediate consequence
of
Corollary \ref{T:wellposedness}.
\end{proof}

In particular, taking $n=3$, $k=1$ and $p=2$ we obtain the result stated
in \cite[Corollary 2.2]{D_1d3d}.

\section{The discrete case}
\label{discrete case}
\setcounter{equation}{0}

The goal of this section is to prove weighted stability estimates
for finite element approximations of the Poisson equation.

Given a convex polygonal or polyhedral domain $\Omega$ and a family of triangulations
${\mathcal T}_h$, where as usual $h>0$ denotes the maximum of the diameters of
the elements, let $V^k_h$ be the space of continuous piecewise
polynomial functions of degree $k\ge 1$.
The finite element approximation $u_h^k\in V^k_h$ of $u$ is given by
$$
\int_\Omega \nabla u^k_h\cdot\nabla v
=\int_\Omega \nabla u\cdot\nabla v
\qquad\forall v\in V^k_h
$$
Observe that $u^k_h$ is well defined for any $u\in W^{1,1}(\Omega)$, in particular,
for any $u\in L^1(\Omega)$ such that $\nabla u\in\bL^p_w$ for some $w\in A_p$.

Since $k$ will be fixed, we will drop it from now on and will write simply $V_h$ and $u_h$. Also, as in the continuous case,
by density we may assume that $u$ is smooth.

\begin{thm}
\label{estabilidad con pesos}
Let $\Omega$ be a convex polygonal or polyhedral domain and assume that the family of partitions ${\mathcal T}_h$
is quasi-uniform. If $w\in A_1$ then there exists a constant $C$, depending only on $[w]_{A_1}$, such that
$$
\|\nabla u_h\|_{\bL^2_{w}(\Omega)}\le C\|\nabla u\|_{\bL^2_{w}(\Omega)}.
$$
\end{thm}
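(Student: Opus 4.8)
The plan is to reduce the weighted stability estimate to a pointwise domination of $|\nabla u_h|$ by the Hardy--Littlewood maximal function of $|\nabla u|$, and then invoke the boundedness of $\cM$ on $L^2_w$. Recall from Section \ref{continuous case} that $A_1\subset A_2$, so $w\in A_2$ and $\cM$ is bounded on $L^2_w$ with a norm controlled by $[w]_{A_2}\le C[w]_{A_1}$; this is exactly the dependence claimed in the statement, and crucially it does not involve $h$. Since by the remark preceding the theorem we may assume $u$ smooth, the pointwise values of $\nabla u_h$ are well defined on each element.

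Fix $z\in\O$ lying in an element $\tau_z$ of the triangulation. The main input is the pointwise gradient estimate of Rannacher and Scott \cite{RS}, which under the quasi-uniformity hypothesis yields a bound of the form
\begin{equation*}
|\nabla u_h(z)|\le C\left(\frac{1}{|\tau_z|}\int_{\tau_z}|\nabla u|\,dy
+\int_\O K_h(z,y)\,|\nabla u(y)|\,dy\right),
\end{equation*}
where the first term is the local average of $|\nabla u|$ over the element containing $z$ and the kernel $K_h(z,\cdot)\ge 0$ encodes the decay away from $z$. The content I would extract from their estimate is that $K_h(z,\cdot)$ is dominated by a radially nonincreasing function $\phi_h$ with $\|\phi_h\|_{L^1(\R^n)}\le C$ uniformly in $h$ and $z$.

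With this in hand, both contributions are bounded by $\cM(|\nabla u|)(z)$. Since the mesh is quasi-uniform, hence shape regular, $\tau_z$ is contained in a ball $B$ centered at $z$ with $|B|\le C|\tau_z|$, so the first term is at most $C\,\cM(|\nabla u|)(z)$. For the second term I would use the classical fact that convolution against a radially nonincreasing $L^1$ kernel is controlled by the maximal function, giving
\begin{equation*}
\int_\O K_h(z,y)\,|\nabla u(y)|\,dy\le C\,\|\phi_h\|_{L^1}\,\cM(|\nabla u|)(z)\le C\,\cM(|\nabla u|)(z).
\end{equation*}
Combining these yields the pointwise bound $|\nabla u_h(z)|\le C\,\cM(|\nabla u|)(z)$ for a.e.\ $z\in\O$, with $C$ independent of $h$. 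Integrating against $w\,dz$ and using the boundedness of $\cM$ on $L^2_w$ gives $\|\nabla u_h\|_{\bL^2_w}\le C\|\cM(|\nabla u|)\|_{L^2_w}\le C\|\nabla u\|_{\bL^2_w}$, with $C$ depending only on $[w]_{A_1}$.

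The main obstacle I anticipate is the second step above: extracting from the Rannacher--Scott estimate the precise form with a radially nonincreasing, uniformly $L^1$-bounded kernel. Their estimate is naturally phrased as a sum over mesh elements weighted by a decay factor, and converting this into a genuine convolution kernel $\phi_h$ with $h$-independent $L^1$ norm---while keeping track of the behavior near $\partial\O$, where convexity is what guarantees the global regularity underlying their bounds---is where the quasi-uniformity is genuinely used and where the care is required. Once this reduction is achieved, the passage to the weighted norm is routine Muckenhoupt theory.
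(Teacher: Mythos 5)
Your overall strategy (Rannacher--Scott localization, then maximal-function and Muckenhoupt theory) is the same as the paper's, but there is a genuine gap exactly at the point you flagged, and it is not a technical conversion issue --- it is essential. The estimate that is actually available, \cite[Corollary 8.2.8]{BS_FEM}, which is what the paper uses as \eqref{localizacion}, reads
$$
|\nabla u_h(z)|^2 \le C\Big\{\Big(\frac1{h^n}\int_{T_z}|\nabla u|\Big)^2
+\int_\O\frac{h^\lambda}{(|x-z|^2+h^2)^{\frac{n+\lambda}{2}}}\,|\nabla u(x)|^2\,dx\Big\},
$$
with $|\nabla u|^2$ inside the decay integral, not $|\nabla u|$. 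Your assumed form, with the kernel acting linearly on $|\nabla u|$, is strictly stronger: by Cauchy--Schwarz (using $\|K_h\|_{L^1}\le C$) it implies the quadratic one, but not conversely, and it is not what \cite{RS} or \cite{BS_FEM} provide. Starting from the correct quadratic form, the radially-decreasing-majorant argument only yields $|\nabla u_h(z)|\le C\,\big(\cM(|\nabla u|^2)\big)^{1/2}(z)$, and then your final step collapses: $\big\|(\cM(|\nabla u|^2))^{1/2}\big\|_{L^2_w}^2=\|\cM(|\nabla u|^2)\|_{L^1_w}$, and the maximal operator is not bounded on $L^1_w$ for any weight. A telltale sign that your input is too strong: your argument uses $w\in A_1$ only through the inclusion $A_1\subset A_2$, so if it were valid it would prove the stability estimate for \emph{every} $w\in A_2$ (and, run in $L^p_w$, for every $A_p$ weight), which is strictly stronger than the theorem and than anything known; the paper explicitly restricts to $w\in A_1$ or (by duality, Corollary \ref{dualidad}) $w^{-1}\in A_1$.

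The paper's proof shows how to live with the quadratic term: instead of bounding it pointwise by a maximal function of $|\nabla u|$, multiply \eqref{localizacion} by $w(z)$, integrate in $z$, and apply Fubini so that the kernel acts on the \emph{weight}, giving $\int_\O\frac{h^\lambda w(z)}{(|x-z|^2+h^2)^{(n+\lambda)/2}}\,dz\le C\cM w(x)$; then the hypothesis $w\in A_1$ is used \emph{pointwise}, via $\cM w(x)\le[w]_{A_1}w(x)$, to absorb this factor into $\int_\O|\nabla u|^2w$. This is precisely where $A_1$ (and not merely $A_2$) is needed, while the $A_2$-type boundedness of $\cM$ on $L^2_w$ is used only for the local-average term, which your proposal handles correctly. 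Rescuing your approach would require proving the linear-kernel pointwise bound $|\nabla u_h|\le C\,\cM(|\nabla u|)$, which would be a new result substantially stronger than the theorem under review.
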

\begin{proof}
Under the quasi-uniformity assumption it was proved in
\cite[Corollary 8.2.8, Page 219]{BS_FEM} that there exist positive constants
$C$ and $\lambda$ such that, if $T_z$ is a triangle containing $z$ then,

\begin{equation}
\label{localizacion}
\begin{aligned}
|\nabla u_h(z)|^2
\le C &\left\{\left(\frac1{h^n}\int_{T_z}|\nabla u(x)| dx\right)^2\right.\\
&\left.+\int_\Omega\frac{h^\lambda}{(|x-z|^2+h^2)^{\frac{n+\lambda}2}}|\nabla u(x)|^2 dx\right\},
\end{aligned}
\end{equation}

Actually, \cite[Corollary 8.2.8, Page 219]{BS_FEM} needs a more restrictive condition on the angles in the 3D-case. However,
the result is still true for general convex polyhedral domains. Indeed, this can be seen with a slight modification of the
arguments in \cite{GLRS} (see \cite{O_pointsource}).

Therefore,
$$
|\nabla u_h(z)|^2
\le C \left\{\cM(|\nabla u(z)|)^2
+\int_\Omega\frac{h^\lambda}{(|x-z|^2+h^2)^{\frac{n+\lambda}2}}|\nabla u(x)|^2 dx\right\}.
$$
Then, multiplying by $w(z)$ an integrating we obtain
\begin{equation}
\label{localizacion 2}
\begin{aligned}
\int_\Omega|\nabla u_h(z)|^2w(z)dz
\le C &\left\{\int_\Omega\cM(|\nabla u(z)|)^2 w(z)dz\right.\\
&\left.+\int_\Omega\int_\Omega\frac{h^\lambda|\nabla u(x)|^2 w(z)}{(|x-z|^2+h^2)^{\frac{n+\lambda}2}}dx dz\right\}.
\end{aligned}
\end{equation}
But
$$
\int_\Omega\frac{h^\lambda w(z)}{(|x-z|^2+h^2)^{\frac{n+\lambda}2}} dz
\le \frac1{h^n}\int_{|x-z|\le h} w(z)dz + \int_{|x-z|> h} \frac{h^\lambda w(z)}{|x-z|^{n+\lambda}}dz
$$
Using a well-known
argument (see \cite[Page 506]{Hedberg}) to bound the second term we have
$$
\int_\Omega\frac{h^\lambda w(z)}{(|x-z|^2+h^2)^{\frac{n+\lambda}2}} dz
\le C\cM w(x)
$$
and, therefore, interchanging the order of integration in \eqref{localizacion 2} we
obtain
$$
\int_\Omega|\nabla u_h(z)|^2 w(z)dz
\le C \left\{\int_\Omega\cM(|\nabla u(z)|)^2 w(z)dz
+\int_\Omega |\nabla u(x)|^2 \cM w(x)dx\right\}.
$$
In particular, if $w\in A_1$, using \eqref{peso A1} and recalling that $A_1\subset A_2$
and so the maximal operator is bounded in $L^2_w$, we conclude that
$$
\|\nabla u_h\|_{\bL^2_{w}(\Omega)}\le C\|\nabla u\|_{\bL^2_{w}(\Omega)}.
$$
\end{proof}
From a known extrapolation theorem we obtain the following result.
\begin{corollary}
Under the hypotheses of the previous theorem, for $2<p<\infty$ there exists a constant $C$
depending only on $p$ and $[w]_{A_1}$, such that,
$$
\|\nabla u_h\|_{\bL^p_{w}(\Omega)}\le C\|\nabla u\|_{\bL^p_{w}(\Omega)}.
$$
\end{corollary}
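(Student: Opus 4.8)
The plan is to obtain the $L^p_w$ estimate from the $L^2_w$ estimate of the previous theorem by invoking Rubio de Francia's extrapolation theorem, in the variant that starts from a weighted inequality valid for \emph{all} $A_1$ weights at a fixed base exponent and extrapolates to larger exponents. First I would cast the problem in the family-of-pairs formulation on which the extrapolation machinery operates: for each admissible $u$ (it suffices to take smooth $u$, by the density reduction already in force) let $u_h\in V_h$ be its finite element approximation and form the pair of nonnegative functions $(|\nabla u_h|,|\nabla u|)$, extended by zero outside $\O$. Let $\mathcal{F}$ denote the collection of all such pairs as $u$ varies.

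The previous theorem supplies precisely the required hypothesis: for every $w\in A_1$,
$$
\int_\O |\nabla u_h|^2\, w\, dx \le C\int_\O |\nabla u|^2\, w\, dx,
$$
with $C$ depending only on $[w]_{A_1}$ and, crucially, independent of $u$. Thus the $L^2_w$ inequality holds uniformly over the whole family $\mathcal{F}$, for all $A_1$ weights, with constant governed by the $A_1$ characteristic. This is exactly the input of the ``extrapolation up'' theorem with base exponent $p_0=2$: if such an inequality holds for all $w\in A_1$, then for every $p$ with $2<p<\infty$ and every $w\in A_{p/2}$ one has $\|\,|\nabla u_h|\,\|_{L^p_w}\le C\|\,|\nabla u|\,\|_{L^p_w}$, the constant depending on $p$ and $[w]_{A_{p/2}}$. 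Applying it yields the desired inequality for all such $p$ and $w$, under the mild proviso that the right-hand side be finite, which holds since $\nabla u\in\bL^p_w$.

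It then remains to match this with the statement of the corollary. Since the hypotheses of the previous theorem fix $w\in A_1$, and $A_1\subset A_{p/2}$ with $[w]_{A_{p/2}}\le [w]_{A_1}$ for $p/2\ge 1$ (monotonicity of the Muckenhoupt characteristics), such $w$ is automatically admissible and the extrapolation constant, a priori a function of $[w]_{A_{p/2}}$, is controlled by $p$ and $[w]_{A_1}$ alone. This gives the claimed dependence of $C$ and completes the argument.

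The point I expect to require care is not any analytic estimate but the correct invocation of the extrapolation theorem. One must use the version that extrapolates \emph{upward} from an $A_1$ hypothesis at $p_0=2$ to $A_{p/2}$ weights at $p>2$, rather than the classical diagonal version, which would demand the inequality for all $A_2$ weights at the base exponent; the theorem of the previous section only provides it for $A_1$ weights. Because that theorem delivers a constant depending solely on $[w]_{A_1}$, all the bookkeeping on constants goes through, and no genuine difficulty remains.
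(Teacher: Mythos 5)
Your proposal is correct and is essentially the paper's own argument: the paper proves this corollary by citing exactly the extrapolation result you invoke (\cite[Corollary 3.5, Page 30]{D2}), namely Rubio de Francia extrapolation in the variant that starts from an inequality valid for all $A_1$ weights at the base exponent $p_0=2$ and extrapolates upward to $p>2$. The only difference is that you spell out the family-of-pairs formulation, the inclusion $A_1\subset A_{p/2}$, and the constant bookkeeping, which the paper leaves implicit in the citation.
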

\begin{proof} See \cite[Corollary 3.5, Page 30]{D2}.
\end{proof}

Next, using a standard duality argument combined with the weighted a priori estimates given in the
previous section, we extend the stability result for weights with inverse in $A_1$.
\begin{corollary}
\label{dualidad}
Under the hypotheses of the previous theorem,
if $w^{-1}\in A_1$ then,
$$
\|\nabla u_h\|_{\bL^2_{w}(\Omega)}\le C\|\nabla u\|_{\bL^2_{w}(\Omega)}.
$$
\end{corollary}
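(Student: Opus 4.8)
The plan is to run a standard Aubin--Nitsche duality argument, exploiting that for $p=2$ the dual of $\bL^2_w(\O)$ is $\bL^2_{w^{-1}}(\O)$, as recalled in the previous section. Thus I would start from the characterization
$$
\|\nabla u_h\|_{\bL^2_w} = \sup_{\mathbf{g}\neq 0}\frac{\int_\O \nabla u_h\cdot\mathbf{g}}{\|\mathbf{g}\|_{\bL^2_{w^{-1}}}},
$$
where the supremum runs over $\mathbf{g}\in\bL^2_{w^{-1}}(\O)$, and it would suffice to bound $\int_\O\nabla u_h\cdot\mathbf{g}$ by $C\|\nabla u\|_{\bL^2_w}\|\mathbf{g}\|_{\bL^2_{w^{-1}}}$ for each such $\mathbf{g}$.

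For a fixed $\mathbf{g}$, I would introduce a dual problem: let $\phi$ be the solution of \eqref{auxproblem} with right hand side $\mbox{div}\,\mathbf{q}$ where $\mathbf{q}=-\mathbf{g}$, so that its weak formulation reads $\int_\O\nabla\phi\cdot\nabla v=\int_\O\mathbf{g}\cdot\nabla v$ for admissible $v$. Since $w^{-1}\in A_1\subset A_2$, Theorem \ref{teo principal} applied with $p=2$ and weight $w^{-1}$ yields the continuous a priori estimate $\|\nabla\phi\|_{\bL^2_{w^{-1}}}\le C\|\mathbf{g}\|_{\bL^2_{w^{-1}}}$. Letting $\phi_h\in V_h$ denote the finite element approximation of $\phi$, the discrete stability of Theorem \ref{estabilidad con pesos} (now legitimately applicable because $w^{-1}\in A_1$) gives $\|\nabla\phi_h\|_{\bL^2_{w^{-1}}}\le C\|\nabla\phi\|_{\bL^2_{w^{-1}}}\le C\|\mathbf{g}\|_{\bL^2_{w^{-1}}}$. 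The key point is that the two main results of this paper are invoked for the same weight $w^{-1}$, the continuous estimate needing only $A_2$ and the discrete one needing $A_1$.

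The heart of the argument is then a short chain of identities built from the symmetry of the Galerkin method. Testing the weak formulation of the dual problem with $v=u_h\in V_h$ gives $\int_\O\mathbf{g}\cdot\nabla u_h=\int_\O\nabla\phi\cdot\nabla u_h$; Galerkin orthogonality for the dual problem (valid since $u_h\in V_h$) replaces $\nabla\phi$ by $\nabla\phi_h$; and Galerkin orthogonality for the original problem (valid since $\phi_h\in V_h$) replaces $\nabla u_h$ by $\nabla u$, yielding
$$
\int_\O\mathbf{g}\cdot\nabla u_h=\int_\O\nabla\phi_h\cdot\nabla u_h=\int_\O\nabla u\cdot\nabla\phi_h.
$$
A weighted H\"older inequality, splitting the integrand as $(\nabla u\,w^{1/2})\cdot(\nabla\phi_h\,w^{-1/2})$, then bounds the right-hand side by $\|\nabla u\|_{\bL^2_w}\|\nabla\phi_h\|_{\bL^2_{w^{-1}}}\le C\|\nabla u\|_{\bL^2_w}\|\mathbf{g}\|_{\bL^2_{w^{-1}}}$, and taking the supremum over $\mathbf{g}$ closes the estimate.

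I expect the only genuinely delicate point to be the admissibility of $u_h$ (and $\phi_h$) as test functions in the weighted variational formulations, that is, checking that these piecewise polynomial functions, which vanish on $\partial\O$ and have bounded gradient, indeed belong to $W^{1,2}_{w^{-1},0}(\O)$; since $w^{-1}\in A_1$ is locally integrable and $\O$ is bounded this is routine, but it is the place that requires care. Everything else is bookkeeping: verifying the weight memberships needed to invoke Theorems \ref{teo principal} and \ref{estabilidad con pesos}, and fixing the sign through the choice $\mathbf{q}=-\mathbf{g}$ so that the dual weak form pairs correctly against $\nabla u_h$.
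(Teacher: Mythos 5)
Your proposal is correct and is essentially the paper's own argument: a dual problem with datum $\mbox{div}\,\mathbf{q}$ controlled in $\bL^2_{w^{-1}}$, the continuous estimate of Theorem \ref{teo principal} (needing only $w^{-1}\in A_2$) combined with the discrete stability of Theorem \ref{estabilidad con pesos} for the weight $w^{-1}$, Galerkin symmetry to swap $u_h$ and $\phi_h$, and a weighted H\"older inequality. The only (cosmetic) difference is that the paper skips the supremum over $\mathbf{g}$ by testing directly with the extremal choice $\bq=w\nabla u_h$, which yields $\|\nabla u_h\|^2_{\bL^2_w}$ on the left and allows division by $\|\nabla u_h\|_{\bL^2_w}$.
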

\begin{proof}
Take $\bq=w\nabla u_h$ and let $v$ be the solution of $-\Delta v=\mbox{div}\bq$
vanishing on $\partial\Omega$. From Theorems \ref{estabilidad con pesos} and \ref{teo principal} we know that
$$
\|\nabla v_h\|_{\bL^2_{w^{-1}}}\le C\|\nabla v\|_{\bL^2_{w^{-1}}}\le C\|\bq\|_{\bL^2_{w^{-1}}}.
$$
Then
$$
\begin{aligned}
&\|\nabla u_h\|^2_{\bL^2_{w}}
=\int_\Omega\nabla u_h\cdot\bq
=\int_\Omega\nabla u_h\cdot\nabla v
=\int_\Omega\nabla u\cdot\nabla v_h\\
&\le C\|\nabla u\|_{\bL^2_{w}}
\|\nabla v_h\|_{\bL^2_{w^{-1}}}
\le C \|\nabla u\|_{\bL^2_{w}}
\|\bq\|_{\bL^2_{w^{-1}}}
=C \|\nabla u\|_{\bL^2_{w}}
\|\nabla u_h\|_{\bL^2_{w}}
\end{aligned}
$$
\end{proof}

As we have done in the continuous case we can apply these results to the
problem considered in \cite{D_1d3d} as well as to the generalization introduced
at the end of the previous section. With the notation used there we have

\begin{thm} Under the hypotheses of Thorem \ref{estabilidad con pesos},
if $\Gamma\subset\overline{\O}$ is a $k$-regular set then, for  $-(n-k)<\lambda<n-k$,
$$
\|\nabla u_h\|_{\bL^2_{w_\lambda}}\le C \|\nabla u\|_{\bL^2_{w_{-\lambda}}}
$$
\end{thm}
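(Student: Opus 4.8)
The plan is to reduce the stated mixed‑weight bound to the same‑weight stability already available, the genuinely new ingredient being a pointwise comparison between $w_\lambda$ and $w_{-\lambda}$ that is legitimate only because the domain is bounded. Write $d(x):=\mbox{dist}(x,\Gamma)$ and $D:=\mbox{diam}(\O)$, so that $d(x)\le D$ for every $x\in\O$ and, directly from the definition of the weights, $w_\lambda(x)=d(x)^{2\lambda}\,w_{-\lambda}(x)$. Note that the stated bound has \emph{different} weights on the two sides, so it cannot be obtained from Theorem \ref{estabilidad con pesos} or Corollary \ref{dualidad} by themselves; the extra step is precisely the transfer of the weight on the left‑hand side.

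First I would record which of the two previous stability results applies. By \eqref{potencias en Ap} used with $p=1$, the weight $w_{-\lambda}$ belongs to $A_1$ exactly when $0\le\lambda<n-k$, while $w_\lambda\in A_1$ exactly when $-(n-k)<\lambda\le 0$. Thus for $0\le\lambda<n-k$ Theorem \ref{estabilidad con pesos} applies \emph{with weight $w_{-\lambda}$} and gives $\|\nabla u_h\|_{\bL^2_{w_{-\lambda}}}\le C\|\nabla u\|_{\bL^2_{w_{-\lambda}}}$, with $C$ depending only on $[w_{-\lambda}]_{A_1}$. The heart of the argument is then the conversion of the left‑hand weight: since $\lambda\ge 0$ and $d\le D$, the identity above yields the pointwise estimate $w_\lambda\le D^{2\lambda}\,w_{-\lambda}$, whence
$$
\|\nabla u_h\|_{\bL^2_{w_\lambda}}\le D^{\lambda}\,\|\nabla u_h\|_{\bL^2_{w_{-\lambda}}}\le C\,\|\nabla u\|_{\bL^2_{w_{-\lambda}}},
$$
which is exactly the claimed inequality. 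Equivalently, one may rerun the Rannacher--Scott localization \eqref{localizacion} directly with the two weights: testing against $w_\lambda$, the non‑local term produces $\int_\O|\nabla u|^2\,\cM w_\lambda$, and for $\lambda\ge 0$ the function $w_\lambda$ is bounded so $\cM w_\lambda\le D^{\lambda}\le D^{2\lambda}w_{-\lambda}$, while the maximal‑function term is handled using $w_{-\lambda}\in A_2$ together with $w_\lambda\le D^{2\lambda}w_{-\lambda}$. Both routes produce the same constant.

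The step I expect to be the genuine obstacle is this passage to the mixed right‑hand side when $-(n-k)<\lambda<0$. There $w_{-\lambda}=d^{-\lambda}$ is the \emph{vanishing} weight and $w_\lambda=d^{\lambda}$ the one that blows up at $\Gamma$, so the comparison reverses: $w_\lambda\le C\,w_{-\lambda}$ fails near $\Gamma$, and neither Theorem \ref{estabilidad con pesos} (applied to $w_\lambda\in A_1$) nor Corollary \ref{dualidad} can be combined with a pointwise bound to recover $\|\nabla u\|_{\bL^2_{w_{-\lambda}}}$ on the right. Concretely, a scaling test that concentrates the mass of $\nabla u$ on a shrinking neighborhood of $\Gamma$ makes $\|\nabla u\|_{\bL^2_{w_{-\lambda}}}$ collapse while $\nabla u_h\approx\nabla u$ for $h$ small enough, so that $\|\nabla u_h\|_{\bL^2_{w_\lambda}}$ (weighted by the blowing‑up $w_\lambda$) stays comparatively large; this forces the constant to degenerate. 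Hence the argument, and the estimate in the form stated, is confined to the range $0\le\lambda<n-k$, which is precisely the sign relevant to the singular‑source applications, where $w_\lambda$ is the vanishing weight compensating the singularity of $\nabla u$ along $\Gamma$.
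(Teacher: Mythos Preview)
You have, in effect, diagnosed a misprint. The paper's own proof invokes only Theorem~\ref{estabilidad con pesos} and Corollary~\ref{dualidad}, both of which produce an estimate with the \emph{same} weight on the two sides; the one-line argument is that, by \eqref{potencias en Ap} with $p=1$, one has $w_\lambda\in A_1$ for $-(n-k)<\lambda<0$ (so Theorem~\ref{estabilidad con pesos} applies with $w=w_\lambda$) and $w_\lambda^{-1}=w_{-\lambda}\in A_1$ for $0<\lambda<n-k$ (so Corollary~\ref{dualidad} applies with $w=w_\lambda$). In either case the conclusion is
\[
\|\nabla u_h\|_{\bL^2_{w_\lambda}}\le C\,\|\nabla u\|_{\bL^2_{w_\lambda}},
\]
and the $w_{-\lambda}$ on the right-hand side of the displayed statement is almost certainly a typographical slip for $w_\lambda$.

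Your analysis of the statement \emph{as literally written} is correct on both counts. For $0\le\lambda<n-k$ the pointwise bound $w_\lambda\le D^{2\lambda}w_{-\lambda}$ together with the same-weight stability for $w_{-\lambda}\in A_1$ does yield the mixed inequality, and this is a legitimate (if slightly wasteful) route. For $-(n-k)<\lambda<0$ your objection is well founded: the mixed inequality with the blowing-up weight on the left and the vanishing one on the right cannot hold with a constant independent of $h$, for exactly the concentration reason you describe. So there is no gap in your reasoning; you simply proved what could be proved of a misstated claim and correctly flagged the rest. The intended theorem is the same-weight estimate over the full range $-(n-k)<\lambda<n-k$, and for that the paper's argument is the natural one.
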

\begin{proof} It is an immediate consequence of Theorem \ref{estabilidad con pesos} and
Corollary \ref{dualidad}
because either $w_\lambda\in A_1$ or $w_{-\lambda}\in A_1$ by \eqref{potencias en Ap}.
\end{proof}

\section*{Acknowledgement}
We thank Dmitriy Leykekhman for helpful comments and references on the estimates for the Green function.

\bibliographystyle{plain}
\bibliography{bibdoc}

\end{document}